\documentclass{amsart}
\usepackage{amsmath,amsthm}
\usepackage{amsfonts,amssymb}

\usepackage{enumerate}

\hfuzz1pc


\newtheorem{thm}{Theorem}[section]
\newtheorem{cor}[thm]{Corollary}
\newtheorem{lem}[thm]{Lemma}
\newtheorem{prop}[thm]{Proposition}

\newtheorem{defn}[thm]{Definition}

\theoremstyle{remark}


\def\sph{\mathbb{S}^{d-1}}
\def\f{\frac}

\def\Bl{\Bigl}
\def\Br{\Bigr}

 \def\g{{\gamma}}
 \def\k{{\kappa}}
 \def\t{{\theta}}
 \def\l{{\lambda}}
 
 \def\o{{\omega}}
 \def\s{{\sigma}}
 \def\la{{\langle}}
 \def\ra{{\rangle}}

 \def\CD{{\mathcal D}}
 \def\CE{{\mathcal E}}
 
 \def\CH{{\mathcal H}}

 \def\CV{{\mathcal V}}
 
 \def\BB{{\mathbb B}}

 \def\RR{{\mathbb R}}
 \def\SS{{\mathbb S}}
 \def\TT{{\mathbb T}}
 \def\ZZ{{\mathbb Z}}
        
        \def\proj{\operatorname{proj}}

  \def\dist{\mathtt{d}}

\def\Bl{\Bigl}
\def\Br{\Bigr}
\def\f{\frac}
\def\({\Bigl(}
\def \){ \Bigr)}

\begin{document}

\title[]
{Uncertainty principle on weighted spheres, balls and simplexes}
\author{Yuan Xu}
\address{Department of Mathematics\\ University of Oregon\\
    Eugene, Oregon 97403-1222.}\email{yuan@math.uoregon.edu}
\thanks{The work was supported in part by NSF Grant DMS-1106113.}

\date{\today}
\keywords{uncertainty principle, unit sphere, reflection invariant weight, ball, simplex}
\subjclass[2000]{42B10, 42C10}

\begin{abstract}
For a family of weight functions $h_\k$ that are invariant under a reflection group, the uncertainty principle
on the unit sphere in the form of 
$$
   \min_{1 \le i \le d}  \int_{\sph} (1- x_i) |f(x)|^2 h_\k^2(x) d\s \int_{\sph}\left |\nabla_0 f(x)\right |^2 h_k^2(x) d\s \ge c
$$
is established for invariant functions $f$ that have unit norm and zero mean, where $\nabla_0$ is the spherical
gradient. In the same spirit, uncertainty principles for weighted spaces on the unit ball and on the standard 
simplex are established, some of them hold for all admissible functions instead of invariant functions. 
\end{abstract}

\maketitle

\section{Introduction} 
\setcounter{equation}{0}

In the form of the classical Heisenberg inequality, the uncertainty principle in $\RR^d$ can be stated as 
\begin{equation*}
     \inf_{a \in \RR^d} \int_{\RR^d} \|x - a\|^2 |f(x)|^2 dx \int_{\RR^d} |\nabla f(x)|^2 dx
          \ge \frac{d^2}{4} \left(\int_{\RR^d} | f(x)|^2 dx \right)^2,
\end{equation*}
where $\nabla$ is the gradient operator. The uncertainty principle has been studied extensively in various 
settings, see \cite{FS, Thanga} and the references therein. Recently,  in \cite{DaiX},  we established an 
uncertainty principle on the unit sphere. Let $\sph$ denote the unit sphere in $\RR^d$ and let $\nabla_0$ 
denote the spherical gradient on $\sph$. The uncertainty inequality in \cite{DaiX} takes the form 
\begin{equation} \label{UCinequality}
   \min_{e\in\sph}  \int_{\sph} (1- \la x, e \ra) |f(x)|^2 d\s \int_{\sph}\left |\nabla_0 f(x)\right |^2 d\s \ge   c_d 
    \left( \int_{\sph} \left |f(x)\right |^2 d\s \right)^2
\end{equation}
for $f \in L^2(\sph)$ satisfying $\int_{\sph} f(x)d\s (x) =0$, where $d\s$ is the surface measure and $c_d$ is a 
constant depending on the dimension $d$ only. Let $\dist(x,y) = \arccos \la x,y\ra$ denote the geodesic 
distance on the sphere. Then $1- \la x,y\ra = 2 \sin^2 \frac{\dist(x,y)}{2} \sim [\dist(x,y)]^2$, which shows 
that the \eqref{UCinequality} is a close analogue of the classical Heisenberg uncertainty principle. Furthermore,
the inequality \eqref{UCinequality} is shown to be stronger than an uncertainty inequality previously
known  in the literature \cite{Erb, NW, RV, Selig}. 

The purpose of the present paper is to establish analogues of \eqref{UCinequality} in several different 
settings. First of all, we consider the weighed space $L^2(h_\k^2, \sph)$ for a family of weight functions 
$h_\k$ of the form
$$
   h_\k(x) = \prod_{v \in R_+} |\la x, v \ra|^{\k_v}, \qquad \k_v \ge 0,
$$
that are invariant under a reflection group, where $R_+$ denotes the set of positive roots that defines
the reflection group and $\k_v$ is a a nonnegative multiplicity function defined on $R_+$ whose values 
are equal whenever reflections in positive roots are conjugate. In the case of the group $\ZZ_2^d$, the
simplest case, 
$$
h_\k(x) = \prod_{i=1}^d|x_i|^{\k_i}, \qquad \k_i \ge 0.
$$
In the setting of a general reflection group, the role of rotation group, 
under which $d\s$ is invariant, is replaced by $h_\k^2 d\s$ and the partial derivatives are replaced by the 
Dunkl operators, $\CD_1,\ldots, \CD_d$, which are first order differential-difference operators that commute 
with each other  \cite{Dunkl}. In particular, the gradient is replaced by $\nabla_{h}:= (\CD_1,\ldots, \CD_d)$ and the 
operator $\nabla_0$ is replaced by the spherical part of $\nabla_{h,0}$, which coincides with $\nabla_0$ 
on functions invariant under the reflection group. Secondly, there is a close relation between analysis on 
the sphere and analysis on the ball, which allows us to consider the setting of $L^2(W, \BB^d)$ for a family 
of weight functions $W$ on the unit ball $\BB^d$ of $\RR^d$, including the weight function 
$$
  W_{\k,\mu}(x) = \prod_{i=1}^d |x_i|^{2\k_i} (1-\|x\|^2)^{\mu-1/2}, \quad \k_i \ge 0, \, \mu \ge 0,
$$
and, in particular, the classical weight function $W_\mu(x) = (1-\|x\|^2)^{\mu -1/2}$ on the ball. Thirdly, a 
further relation between analysis on $\BB^d$ and that on the simplex 
$\TT^d = \{x \in \RR^d: x_i \ge 0, 1 -x_1-\cdots -x_d \ge 0\}$ allows us to study uncertainty principles on the 
simplex $\TT^d$ with respect to several families of weight functions, including the classical 
weight functions
$$
U_\k(x) = x_1^{\k_1} \cdots x_d^{\k_d} (1-x_1-\ldots-x_d)^{\k_{d+1}}, \quad x \in \TT^d.
$$ 
 
Our proof relies on various properties of differential and differential-difference operators on the sphere. 
The background and the basic results are reviewed in Section 2. Based on the Dunkl operators and the
analogous Laplacian defined by $\Delta_h = \CD_1^2+ \ldots  +\CD_d^2$, a rich analogue of classical harmonic 
analysis for the measure $h_\k^2 d\s$ has been developed (see \cite{DaiXbook, DX} and the references 
therein). It turns out, however, that the analogue of the spherical gradient, $\nabla_{h,0}$, has not been 
studied much in the literature. A detailed study of this operator and several other related operators is 
carried out in Section 3;  the results in this section will likely be useful for further study in weighted 
spaces on the sphere. In Section 4, we establish our uncertainty inequalities in the space $L^2(h_\k^2, \sph)$.
Our inequality holds for invariant functions under a general reflection group. In Section 5, we deduce 
uncertainty inequalities on the unit ball 
from those on the sphere; for the classical weight function $W_\mu$ on the ball, our inequality holds for all 
admissible functions, not just for invariant functions. Finally, in Section 6, we deduce uncertainty inequalities 
on the simplex from those on the ball; the inequality for the classical weight function $U_\k$ holds for all 
admissible functions. 

\section{Preliminary} 
\setcounter{equation}{0}

Throughout this paper, we denote by $\langle x,y\rangle$ and $\|x\|$ the usual Euclidean inner product 
and norm. For the formal inner product between a vector in $\RR^d$ and an $d$-tuple of operators,
say $T = (T_1,\ldots, T_d)$, we use the notation of dot product $x \cdot T$. For example, 
$x \cdot \nabla = \sum_{i=1}^d x_i \partial_i$. We also use the dot product for the former inner product
between two $d$-tuple of operators. 

\subsection{Ordinary differential operators} Let $\partial_j$ denote the $j$-th partial derivative
operator. The usual gradient operator and the Laplace operator are defined by, respectively, 
$$
  \nabla = (\partial_1,\ldots, \partial_d) \qquad \hbox{and} \qquad \Delta = \partial_1^2 + \ldots \partial_d^2.
$$
We can also write, symbolically, that $\Delta  = \nabla \cdot \nabla$ using the formal dot product. In 
spherical polar coordinates, $x = r \xi$ with $r > 0$ and $\xi \in \sph$, we have 
\begin{equation}\label{eq:nabla}
  \nabla = \xi \frac{d}{d r} + \frac{1}{r} \nabla_0 \quad \hbox{and} \quad \Delta = \frac{d^2}{dr^2}
    + \frac{d-1}{r}\frac{d}{dr} + \frac{1}{r^2} \Delta_0,        
\end{equation}
where $\nabla_0$ is the spherical gradient vector and $\Delta_0$ is the Laplace-Beltrami 
operator. It is well known that $\Delta_0$ has spherical harmonics as eigenfunctions. 
Furthermore, these two operators acted on $\xi$ variables and it is easy to see that 
$$
\xi \cdot \nabla_0 = 0, \qquad \xi \in \sph.
$$
Let us also mention that 
$$
   \Delta_0 = \nabla_0 \cdot \nabla_0
$$
holds. There is another family of differential operators that interact with $\nabla_0$ and $\Delta_0$. 
They are angular derivatives defined by 
$$
       D_{i,j}  = x_i \partial_j - x_j \partial_i, \qquad 1 \le i \ne j \le d.
$$
In terms of polar coordinates $(x_i, x_j) = r_{i,j} (\cos \t_{i,j}, \sin \t_{i,j})$ on the $(x_i,x_j)$ plane, 
the operator $D_{i,j}$ is the angular derivative $D_{i,j} = \partial / \partial \t_{i,j}$. These operators are 
infinitesimal operators of the regular representation of the rotation group and they are closely related 
to the spherical gradient $\nabla_0$ and Laplace-Beltrami opeator $\Delta_0$. Indeed, we have  
\begin{equation} \label{Delta=Dij2}
   \Delta_0 = \sum_{1 \le i< j \le d} D_{i,j}^2,
\end{equation}
and 
\begin{equation} \label{nabla=Dij}
  \nabla_0 f(x) \cdot \nabla_0 g(x) = \sum_{1\le i < j \le d} D_{i,j}f(x) D_{i,j}g(x). 
\end{equation}
Furthermore, it is known that, for $f, g \in C^1(\sph)$,  
\begin{equation} \label{Dij=Dij}
  \int_{\sph} D_{i,j} f(x) g(x) d\s(x)  = -  \int_{\sph} f(x) D_{i,j} g(x) d\s(x).
\end{equation}
Most of the properties in this subsection are classical. The proof of the last three displayed identities
can be found in \cite[Section 1.8]{DaiXbook}.
 
\subsection{Differential-difference operators}
Let $v$ be a nonzero vector in $\RR^d$ . The reflection $\s_v$ along $v$ is defined by
$$
     x \sigma_v := x - 2 \langle x,v \rangle v /\|v\|^2, \qquad x \in \RR^d.
$$
Let $G$ be a finite 
reflection group on $\RR^{d}$ with a fixed positive root system $R_+$. Then $G$ is a 
subgroup of the orthogonal group generated by the reflections $\{\sigma_v: v \in R_+\}$. 
Let $\kappa$ be a nonnegative multiplicity function $v \mapsto \kappa_v$ defined on $R_+$ 
with the property that $\kappa_u = \kappa_v$ whenever $\sigma_u$ is conjugate to 
$\sigma_v$ in $G$; then $v \mapsto \kappa_v$ is a $G$-invariant function.  Associated with 
each $\sigma_v$, we define an operator, also denoted by $\s_v$, by
$$
   \s_v f(x):= f(x \s_v). 
$$
We are now in a position to define the Dunkl operators introduced in \cite{Dunkl}.

\begin{defn} 
Let $v\mapsto \k_v$ be a multiplicity function associated with a finite reflection group $G$.
The Dunkl operators are defined by, for $1 \le j \le d$,
$$
  \CD_j : = \partial_j  +  \sum_{v \in R_+} \k_v v_j E_v, \qquad   E_v   : =   \frac{ I - \s_v}{\la x ,v\ra}. 
$$
where $v = (v_1,\ldots, v_d)$ and $I$ denotes the identity operator.  
\end{defn}

These are first order differential-diffenece operators and they satisfy a remarkable commuting
property, 
$$
  \CD_i \CD_j = \CD_j \CD_i, \qquad 1 \le i, j \le d.
$$
Thus, they can be regarded as extensions of the ordinary partial differential operators. An analogue of the
Laplace operator, called $h$-Laplacian, is defined by
$$
   \Delta_h = \sum_{i =1}^d \CD_i^2, 
$$
which shares many properties of the ordinary Laplacian.  In terms of ordinary differential operators
$\nabla$ and $\Delta$, the $h$-Laplacian is given explicitly by
$$
 \Delta_h = \Delta + \sum_{v\in R_+} \k_v \left(\frac{2\,   v \cdot \nabla}{\la v,x\ra} 
        - \|v\|^2 \frac{I - \s_v}{\la x, v\ra^2} \right).
$$ 
In spherical polar coordinates, the $h$-Laplacian satisfies the relation 
\begin{equation}\label{eq:Delta_h}
  \Delta_h =   \frac{d^2}{dr^2} + \frac{2\l_\k+1}{r} \frac{d}{dr} + \frac{1}{r^2} \Delta_{h,0},
\end{equation}
where $\Delta_{h,0}$ denotes the $h$-Laplace-Beltrami operator that acts only on $\xi$ and 
\begin{equation}\label{eq:lambda_k}
   \lambda_\k : = \g_\k+ \frac{d-2}{2} \quad \hbox{with} \quad \g_\k : = \sum_{v\in R_+} \k_v. 
\end{equation}
 
\subsection{h-harmonics and orthogonal expansions}
A homogeneous polynomial $Y$ on $\RR^d$ that satisfies $\Delta_{h,0}Y = 0$ is called an 
$h$-harmonic polynomial. The restriction of such polynomials on the unit sphere $\sph$ 
are called $h$-spherical harmonics, which are orthogonal with respect to 
the inner product of $L^2(\sph, h_\k^2)$, where
\begin{equation} \label{hk-weight}
h_\k(x):= \prod_{v \in R_+} |\langle x, v\rangle|^{\kappa_v}, \qquad  x \in \RR^d.
\end{equation}
More precisely, let $\CH_n^d(h_\k^2)$ denote the space of $h$-harmonics of degree $n$
in $\RR^d$; then for $Y_n \in \CH_n^d(h_\k^2)$ with $n = 0,1,2,\ldots$,
$$
  \int_{\sph} Y_n (x) Y_m(x) h_\k^2(x) d\s(x) = 0, \qquad n \ne m.
$$
The space of $\CH_n^d(h_k^2)$ consists of the eigenfunctions of the $h$-Laplace-Beltrami
operator, that is,
\begin{equation}\label{eigen-eqn}
     \Delta_{h,0} Y(x) = - n(n + 2 \l_\k) Y(x), \qquad Y \in \CH_n^d(h_\k^2). 
\end{equation}

Let us denote by $L^2(\sph, h_\k^2)$ the space of measurable functions $f$ for which 
$$
   \|f\|_{L^2(\sph, h_\k^2)} : = \left(\frac{1}{\o_d^\k} \int_{\sph} |f(x)|^2 h_\k^2(x) d\s(x) \right)^{1/2} 
$$
are finite, where $\o_d^\k$ denotes the normalization constant 
$$
\o_d^\k : =  \int_{\sph} h_\k^2(x) d\s(x).
$$
Let $\proj_n^\k:L^2(\sph, h_\k^2) \mapsto \CH_n^d(h_\k^2)$ be the orthogonal projection operator.
The standard Hilbert space argument shows that 
$$
   f (x) = \sum_{n=0}^\infty \proj_n^\k f(x), \qquad  f \in L^2(\sph, h_\k^2), 
$$ 
where the equality holds in the $L^2$ sense. Since $\proj_n^\k f \in \CH_n^d(h_\k^2)$, it follows
from \eqref{eigen-eqn} that 
\begin{equation}\label{eq:Delta-power}
    (- \Delta_{h,0})^r f(x) =  \sum_{n=1}^\infty (n (n+2\l_k))^r \proj_n^\k f(x)
\end{equation}
for $r$ being a positive integer. Furthermore, we can use \eqref{eq:Delta-power} as the 
definition of $(-\Delta_{h,0})^r$ for $r$ being any real number. 

\section{Relations of differential-difference operators}
\setcounter{equation}{0}

The Dunkl operators $\CD_j$ play the role of differential operators $\partial_j$ in the setting of reflection
group. We define an analogue of the gradient vector as 
$$
    \nabla_{h} := (\CD_1, \ldots, \CD_d). 
$$
The next lemma defines $\nabla_{h,0}$, which is an analogue of the spherical gradient. 

\begin{lem} \label{lem:nabla_h0}
In the spherical polar coordinates $x = r \xi  \in \RR^d$, where $r > 0$ and $\xi \in \sph$, we have 
\begin{equation}\label{eq:nabla_h}
      \nabla_h = \xi \frac{d}{d r} + \frac{1}{r} \nabla_{h,0},
\end{equation}
where $\nabla_{h,0}$, an analogue of the spherical gradient, satisfies
\begin{equation}\label{eq:nabla_h0}
    \nabla_{h,0} f(\xi) = \nabla_0  f(\xi)+ \sum_{v\in R_+} \k_v E_v f(\xi) v, \quad \xi \in \sph.
\end{equation}
\end{lem}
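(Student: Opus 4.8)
The plan is to start from the defining formula for each Dunkl operator, $\CD_j = \partial_j + \sum_{v\in R_+}\k_v v_j E_v$, and split it according to the polar decomposition $x = r\xi$. For the differential part we already know from \eqref{eq:nabla} that $\nabla = \xi\,d/dr + r^{-1}\nabla_0$, so the only thing to analyze is how the difference part $\sum_{v\in R_+}\k_v v_j E_v$ transforms. The key observation is that each reflection $\s_v$ is an orthogonal transformation, hence commutes with the dilation $x\mapsto r\xi$: if $x = r\xi$ then $x\s_v = r(\xi\s_v)$, and moreover $\xi\s_v \in \sph$ since $\s_v$ preserves the norm. Therefore $\s_v f(x) = f(r\,\xi\s_v)$, and if we regard $f$ as a function on $\RR^d\setminus\{0\}$ written in the coordinates $(r,\xi)$, the action of $\s_v$ touches only the $\xi$ variable.

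Next I would compute $E_v f(x) = \f{f(x) - \s_v f(x)}{\la x,v\ra}$ in polar coordinates. Since $\la x,v\ra = r\la \xi,v\ra$, we get
\begin{equation*}
  E_v f(x) = \f{1}{r}\cdot\f{f(r\xi) - f(r\,\xi\s_v)}{\la \xi,v\ra},
\end{equation*}
which is exactly $\f{1}{r}$ times the difference operator $E_v$ applied in the $\xi$ variable (for $r$ held fixed). Assembling the vector $\nabla_h = (\CD_1,\dots,\CD_d)$, the difference contribution is $\sum_{v\in R_+}\k_v v E_v$ (now $v$ appearing as the full vector $(v_1,\dots,v_d)$), and by the computation just made this equals $\f{1}{r}\sum_{v\in R_+}\k_v v\, (E_v)_\xi$. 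Combining with the differential part $\xi\,d/dr + r^{-1}\nabla_0$ gives
\begin{equation*}
  \nabla_h = \xi\f{d}{dr} + \f{1}{r}\Bigl(\nabla_0 + \sum_{v\in R_+}\k_v E_v\, v\Bigr),
\end{equation*}
which is \eqref{eq:nabla_h} with $\nabla_{h,0}$ defined by the expression in \eqref{eq:nabla_h0}. One then checks that the bracketed operator genuinely acts only on $\xi$: $\nabla_0$ does by its very definition, and each $E_v$ does by the displayed computation, so the decomposition is well posed and $\nabla_{h,0}$ is an operator on functions on $\sph$.

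The only subtle point — and the one I would be most careful about — is the bookkeeping of the homogeneity/radial factors: one must verify that the $\la x,v\ra$ in the denominator of $E_v$ produces precisely one factor of $r^{-1}$, matching the $r^{-1}$ that multiplies $\nabla_0$, so that the difference part and the differential part can be collected under a single $\f1r$. This is immediate from $\la x,v\ra = r\la\xi,v\ra$, but it is what makes the two pieces combine into a clean analogue of the spherical gradient rather than something with mismatched scaling. A secondary point worth a sentence is that $\s_v$ commuting with dilations is exactly orthogonality of $\s_v$, which is part of the standing hypothesis that $G$ is a subgroup of the orthogonal group; no further regularity of $f$ beyond $C^1$ is needed, since the difference operators are applied to $f$ itself, not its derivatives.
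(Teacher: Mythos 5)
Your proof is correct and follows essentially the same route as the paper: decompose $\nabla_h = \nabla + \sum_{v\in R_+}\k_v E_v\,v$, apply \eqref{eq:nabla} to the differential part, and use $\la x,v\ra = r\la\xi,v\ra$ together with $x\s_v = r(\xi\s_v)$ to extract the single factor of $r^{-1}$ from each $E_v$. The extra remarks on orthogonality of $\s_v$ and the scaling bookkeeping are exactly the points the paper treats as "evident," so nothing is missing.
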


\begin{proof}
By definition, $\nabla_h = \nabla + E$, where $E= \sum_{v\in R_+} \k_v E_v v$. 
Since we evidently have 
$$
      E_v f(r \xi) = \frac{1}{r} \frac{ f(r \xi) - f(r \xi \s_v)}{\la v, \xi \ra},
$$
the stated results follow immediately from this identity and \eqref{eq:nabla}. 
\end{proof}

Despite extensive studies of Dunkl operators and associated harmonic analysis (see, for example,
\cite{DaiXbook, DX}), the operator $\nabla_{h,0}$ and its relation with $\Delta_{h,0}$ and the operators
$\CD_{i,j}$, which are analogues of $D_{i,j}$ to be defined later, have not been studied much in the
literature. In the rest of this section, we will study relations between these operators carefully. 

We start with a simple, yet important, observation on $\nabla_{h,0}$. 
\begin{lem} 
For $\xi \in \sph$, 
\begin{equation} \label{xi-na_h0}
         \xi \cdot \nabla_{h,0} f(\xi) = \sum_{v \in R_+} \k_v \big[ f(\xi) - f(\xi \s_v) \big]. 
\end{equation}
\end{lem}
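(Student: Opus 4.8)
The plan is to obtain \eqref{xi-na_h0} directly from the formula for $\nabla_{h,0}$ established in \lemref{lem:nabla_h0}, with no extra machinery. Starting from \eqref{eq:nabla_h0} and taking the Euclidean dot product with $\xi$, one gets
$$
  \xi \cdot \nabla_{h,0} f(\xi) = \xi \cdot \nabla_0 f(\xi) + \sum_{v \in R_+} \k_v \big( E_v f(\xi) \big)\, \la \xi, v \ra .
$$
The first term on the right vanishes, since $\xi \cdot \nabla_0 = 0$ for $\xi \in \sph$ (recorded in Section 2). For the sum, I would substitute the definition $E_v = (I - \s_v)/\la x, v\ra$, which gives $\big( E_v f(\xi)\big)\la \xi, v\ra = f(\xi) - f(\xi\s_v)$; plugging this in yields exactly the claimed identity.

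The only point that deserves a word of care — and it is the closest thing to an ``obstacle'' here — is that $E_v$ is a priori defined only on the set $\la x, v\ra \ne 0$ and is extended across the hyperplane $\la x, v\ra = 0$ by continuity (equivalently, by exact polynomial division on polynomials, as in the definition of the Dunkl operators). Once this is kept in mind, the identity $\big(E_v f(\xi)\big)\la \xi, v\ra = f(\xi) - f(\xi\s_v)$ holds for \emph{all} $\xi \in \sph$: away from the hyperplane it is a cancellation of the factor $\la \xi, v\ra$, while on the hyperplane both sides equal $0$ (the left because of the factor $\la \xi, v\ra$, the right because $\xi\s_v = \xi$ there). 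Hence the computation goes through for every $\xi \in \sph$ and the proof is complete in a few lines.
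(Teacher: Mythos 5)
Your proof is correct and is essentially identical to the paper's: both take the dot product of \eqref{eq:nabla_h0} with $\xi$, use $\xi \cdot \nabla_0 f(\xi) = 0$, and invoke the identity $\la \xi, v\ra E_v f(\xi) = f(\xi) - f(\xi\s_v)$. Your extra remark about the behavior on the hyperplane $\la \xi, v\ra = 0$ is a reasonable point of care but does not change the argument.
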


\begin{proof}
The definition shows immediately that  $\la \xi, v\ra E_v f(\xi) = f(\xi) - f (\xi \s_v)$. 
Hence, by $\xi \cdot \nabla_0 f(\xi) =0$, \eqref{xi-na_h0} follows from \eqref{eq:nabla_h0}.
\end{proof}

This lemma captures a major difference between ordinary spherical gradient, for 
which $\xi \cdot \nabla_0 = 0$, and the $h$-spherical gradient, as seen from our
next proposition and several later results. 

\begin{prop}
For $\xi \in \sph$, we have
\begin{equation} \label{Delta-na-h}
  \Delta_{h,0} = \nabla_{h,0}\cdot \nabla_{h,0} - \xi \cdot \nabla_{h,0}.
\end{equation}
\end{prop}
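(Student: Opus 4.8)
The plan is to start from the tautology $\Delta_h=\nabla_h\cdot\nabla_h$, which holds because both sides equal $\sum_{i=1}^d\CD_i^2$, expand the right-hand side in polar coordinates via \eqref{eq:nabla_h}, and then compare with the polar form \eqref{eq:Delta_h} of $\Delta_h$. Writing $\partial_r=d/dr$ and substituting $\nabla_h=\xi\partial_r+\tfrac1r\nabla_{h,0}$, I would distribute the dot product into four pieces. Because $\xi$ is independent of $r$, because $\nabla_{h,0}$ acts only on the angular variable (so it commutes with $\partial_r$ and with multiplication by functions of $r$), and because $\|\xi\|=1$, the piece $(\xi\partial_r)\cdot(\xi\partial_r)$ reduces to $\partial_r^2$ and the piece $(\tfrac1r\nabla_{h,0})\cdot(\tfrac1r\nabla_{h,0})$ to $\tfrac1{r^2}\nabla_{h,0}\cdot\nabla_{h,0}$, while the piece $(\xi\partial_r)\cdot(\tfrac1r\nabla_{h,0})$ reduces, via $\partial_r\tfrac1r=-\tfrac1{r^2}$, to $-\tfrac1{r^2}\,\xi\cdot\nabla_{h,0}+\tfrac1r\,\xi\cdot\nabla_{h,0}\partial_r$. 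This part parallels exactly the classical derivation of $\Delta=\partial_r^2+\tfrac{d-1}{r}\partial_r+\tfrac1{r^2}\Delta_0$.

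The only piece with genuine content is $(\tfrac1r\nabla_{h,0})\cdot(\xi\partial_r)=\tfrac1r\sum_i N_i(\xi_i\,\partial_r f)$, where $\nabla_{h,0}=(N_1,\dots,N_d)$ and $N_i=(\nabla_0)_i+\sum_{v\in R_+}\k_v v_i E_v$. For the differential part of $N_i$, the ordinary Leibniz rule together with the classical facts $\sum_i(\nabla_0)_i\xi_i=d-1$ and $\xi\cdot\nabla_0=0$ produces $(d-1)\partial_r f$. For the difference part I would first record the twisted Leibniz identity
$$
  E_v\bl(\xi_i\,g\br)(\xi)=\xi_i\,E_v g(\xi)+\frac{2v_i}{\|v\|^2}\,g(\xi\s_v),
$$
obtained by writing $(\xi\s_v)_i=\xi_i-2\la\xi,v\ra v_i/\|v\|^2$ in the numerator of $E_v$; multiplying by $\k_v v_i$, summing over $i$ and $v\in R_+$, and using $\la v,\xi\ra E_v g=g-\s_v g$ then collapses the difference part to $\g_\k\,\partial_r f+\sum_{v\in R_+}\k_v\,\s_v(\partial_r f)$.

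It remains to assemble the four pieces. Applying \eqref{xi-na_h0} to $g=\partial_r f$ gives $\xi\cdot\nabla_{h,0}\partial_r f=\g_\k\,\partial_r f-\sum_{v\in R_+}\k_v\,\s_v(\partial_r f)$, so the reflection-valued term $\tfrac1r\sum_{v}\k_v\,\s_v(\partial_r f)$ produced by $(\tfrac1r\nabla_{h,0})\cdot(\xi\partial_r)$ is cancelled exactly by the $\tfrac1r\,\xi\cdot\nabla_{h,0}\partial_r f$ term from $(\xi\partial_r)\cdot(\tfrac1r\nabla_{h,0})$, and the surviving coefficient of $\tfrac1r\partial_r$ is $(d-1)+2\g_\k=2\l_\k+1$. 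Thus
$$
  \nabla_h\cdot\nabla_h=\partial_r^2+\frac{2\l_\k+1}{r}\,\partial_r+\frac1{r^2}\bl(\nabla_{h,0}\cdot\nabla_{h,0}-\xi\cdot\nabla_{h,0}\br),
$$
and comparing with \eqref{eq:Delta_h} and cancelling the common radial part yields \eqref{Delta-na-h}. The main difficulty is careful bookkeeping: since $\xi\cdot\nabla_{h,0}\ne0$, unlike the classical $\xi\cdot\nabla_0=0$, extra reflection-valued terms are generated in two of the four pieces, and the twisted Leibniz rule for $E_v(\xi_i g)$ together with \eqref{xi-na_h0} are precisely what make them cancel.
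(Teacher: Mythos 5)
Your proposal is correct and follows essentially the same route as the paper: expand $\nabla_h\cdot\nabla_h$ via \eqref{eq:nabla_h}, evaluate the cross terms, and compare with \eqref{eq:Delta_h}. The only cosmetic difference is that you compute $\nabla_{h,0}\cdot\xi$ componentwise through the twisted Leibniz rule for $E_v(\xi_i g)$, whereas the paper packages the same computation as $E_v(\la\xi,v\ra f)=(I+\s_v)f$ and then invokes \eqref{xi-na_h0}; the resulting identity $\nabla_{h,0}\cdot\xi=2\l_\k+1-\xi\cdot\nabla_{h,0}$ and the final assembly are identical.
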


\begin{proof}
Since $\Delta_h= \nabla_h \cdot \nabla_h$, it follows from the relation \eqref{eq:nabla_h} that
\begin{align} \label{eq:2.2-proof1}
    \Delta_h  =  \frac{d^2}{dr^2} + \frac{1}{r} (\nabla_{h,0}\cdot \xi) \frac{d}{dr} 
          + \xi \frac{d}{dr} \left (\f{1}{r} \nabla_{h,0} \right) + \frac{1}{r^2} \nabla_{h,0} \cdot \nabla_{h,0}. 
\end{align}
By the definition of reflection, $\la \xi \s_v,v \ra = - \la \xi ,v \ra$, it follows that 
$$
  E_v \la \xi,v\ra f(\xi)= \frac{ \la \xi, v\ra f(\xi)- \la \xi \s_v, v\ra f(\xi \s_v)}{\la \xi, v\ra} =
   f(\xi) + \la \xi , v\ra f(\xi \s_v)  = (I + \s_v)f(\xi); 
$$
moreover, by \eqref{eq:nabla}, it follows readily that $\nabla_0 \cdot \xi = d-1$. Consequently, by
\eqref{eq:nabla_h0}, we obtain
\begin{align} \label{nabla_h0x}
   \nabla_{h,0} \cdot \xi & = \nabla \cdot \xi + \sum_{v \in R_+}\k_v  E_v \la \xi,v\ra 
       =  d-1 + \sum_{v \in R_+}  \k_v (I + \s_v) \\
        &  = d-1 + 2 \g_\k - \sum_{v \in R_+} \k_v (I - \s_v) = 2 \lambda_\k + 1 - \xi \cdot \nabla_{h,0} \notag
\end{align}
by \eqref{xi-na_h0}. Furthermore, since $\frac{d}{dr}$ commutes with $\xi \cdot \nabla_{h,0}$, 
$$
\xi \frac{d}{dr} \left (\f{1}{r} \nabla_{h,0} \right) =  - \frac{1}{r^2} \xi \cdot \nabla_{h,0}+
   (\xi \cdot \nabla_{h,0})  \frac{1}{r} \frac{d}{dr}.    
$$
Inserting the last two displayed identities into \eqref{eq:2.2-proof1} shows that 
$$
 \Delta_{h} = \frac{d^2}{dr^2} + \frac{2 \l_\k+1}{r}  \frac{d}{dr} 
   + \frac{1}{r^2} \left(  \nabla_{h,0} \cdot  \nabla_{h,0} - \xi \cdot \nabla_{h,0} \right), 
$$
which proves, upon comparing with \eqref{eq:Delta_h}, the identity \eqref{Delta-na-h}. 
\end{proof}

We now define analogues of the angular derivatives $D_{i,j}$. 

\begin{defn}
For $1 \le i, j \le d$, define differential-difference operators 
$$
  \CD_{i,j} : = x_i \CD_j - x_j \CD_i. 
$$
\end{defn}

By the definition of $D_{i,j}$, we can write 
\begin{equation}\label{decompCDij}
    \CD_{i,j} = D_{i,j} + E_{i,j}, \qquad E_{i,j}:= \sum_{v \in R_+} \k_v (x_i v_j - x_jv_i) E_v.
\end{equation}

\begin{prop} 
For $f, g  \in C^1(\sph)$ and $1 \le i < j \le d$, 
\begin{equation} \label{CDij=CDij}
   \int_{\sph} \CD_{i,j} f(x) g(x) h_\k^2(x) d\s(x) = - \int_{\sph} f(x)  \CD_{i,j} g(x)h_\k^2(x) d\s(x). 
\end{equation}
\end{prop}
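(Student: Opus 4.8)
The plan is to reduce \eqref{CDij=CDij} to the identity \eqref{Dij=Dij} already recorded for the ordinary angular derivative $D_{i,j}$, by means of the splitting \eqref{decompCDij}: $\CD_{i,j} = D_{i,j} + E_{i,j}$ with $E_{i,j} = \sum_{v\in R_+}\k_v A_v\,E_v$, where I write $A_v(x) := x_iv_j - x_jv_i$. Since \eqref{CDij=CDij} is equivalent to
$$
  \int_{\sph}\bigl(\CD_{i,j}f\cdot g + f\cdot\CD_{i,j}g\bigr)h_\k^2\,d\s = 0,
$$
I will evaluate separately the differential part $\int_{\sph}(D_{i,j}f\cdot g + f\cdot D_{i,j}g)h_\k^2\,d\s$ and the difference part $\int_{\sph}(E_{i,j}f\cdot g + f\cdot E_{i,j}g)h_\k^2\,d\s$, and show that they cancel.

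For the differential part, $D_{i,j} = x_i\p_j - x_j\p_i$ is a derivation, so $D_{i,j}f\cdot g + f\cdot D_{i,j}g = D_{i,j}(fg)$, and \eqref{Dij=Dij} applied to the pair $(fg,h_\k^2)$ gives $\int_{\sph}(D_{i,j}f\cdot g + f\cdot D_{i,j}g)h_\k^2\,d\s = -\int_{\sph}fg\,D_{i,j}(h_\k^2)\,d\s$. A logarithmic-derivative computation, using $D_{i,j}\la x,v\ra = A_v(x)$, yields $D_{i,j}(h_\k^2) = 2h_\k^2\sum_{v\in R_+}\k_v A_v(x)/\la x,v\ra$; this quantity is integrable against $d\s$ because $h_\k^2/\la x,v\ra$ is comparable to $|\la x,v\ra|^{2\k_v-1}$ near the hyperplane $\la x,v\ra=0$ and $\k_v>0$. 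Since $h_\k$ fails to be $C^1$ across those hyperplanes when $\k_v<1/2$, this step should be made rigorous either by first removing $\va$-neighborhoods of the hyperplanes (the boundary terms are $O(\va^{2\k_v})$ and vanish as $\va\to0$), or by first proving \eqref{CDij=CDij} when all $\k_v$ are large and then continuing analytically in $\k$.

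For the difference part, insert $E_vf(x) = \bigl(f(x)-f(x\s_v)\bigr)/\la x,v\ra$ and, in each term carrying a reflected argument, change variables $x\mapsto x\s_v$. This uses that $\s_v$ is orthogonal (hence preserves $\sph$ and $d\s$), that $h_\k^2(x\s_v)=h_\k^2(x)$ by the $G$-invariance of $h_\k$, that $\la x\s_v,v\ra=-\la x,v\ra$, and the elementary identity $A_v(x\s_v)=A_v(x)$ (valid because $\s_v$ fixes $v$). One finds the difference part equals
$$
  \sum_{v\in R_+}\k_v\int_{\sph}\frac{A_v(x)}{\la x,v\ra}\bigl(2f(x)g(x)+f(x)g(x\s_v)+f(x\s_v)g(x)\bigr)h_\k^2(x)\,d\s(x).
$$
Adding this to the differential part, the term $2\sum_v\k_v\int_{\sph}A_vfg\,h_\k^2/\la x,v\ra$ cancels $-\int_{\sph}fg\,D_{i,j}(h_\k^2)\,d\s$ and there remains $\sum_v\k_v\int_{\sph}(A_v/\la x,v\ra)\bigl(f(x)g(x\s_v)+f(x\s_v)g(x)\bigr)h_\k^2\,d\s$. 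A final substitution $x\mapsto x\s_v$ in the first summand (again producing a minus sign from $\la x\s_v,v\ra=-\la x,v\ra$, while $A_v$ and $h_\k^2$ are untouched) turns it into the negative of the second, so the remaining sum is zero. This gives \eqref{CDij=CDij}.

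I expect the only delicate point to be the regularity of $h_\k$ near the reflecting hyperplanes in the differential part; everything else is bookkeeping of the reflections $\s_v$ and of the sign $\la x\s_v,v\ra=-\la x,v\ra$. This obstacle can be avoided entirely by deducing \eqref{CDij=CDij} from the $\RR^d$ integration-by-parts formula $\int_{\RR^d}\CD_jF\cdot G\,h_\k^2\,dx = -\int_{\RR^d}F\cdot\CD_jG\,h_\k^2\,dx$ (see \cite{DaiXbook}): extend $f,g$ off $\sph$ as functions homogeneous of degree $0$, multiply by a radial cutoff $\phi(\|x\|)$, use $\CD_{i,j}(\phi(\|x\|)F)=\phi(\|x\|)\CD_{i,j}F$ and the fact that $\CD_{i,j}$ preserves the degree of homogeneity to split off a positive radial integral, and apply the $\RR^d$ formula to the two pieces $x_i\CD_j$, $x_j\CD_i$.
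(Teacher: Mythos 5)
Your proof is correct and follows essentially the same route as the paper's: the splitting $\CD_{i,j}=D_{i,j}+E_{i,j}$, the logarithmic derivative of $h_\k^2$ for the differential part, the substitution $x\mapsto x\s_v$ together with $\la x\s_v,v\ra=-\la x,v\ra$ and $A_v(x\s_v)=A_v(x)$ for the difference part, and a reduction to large $\k_v$ followed by analytic continuation. (One trivial slip: $A_v(x\s_v)=A_v(x)$ holds because $x\s_v-x$ is a multiple of $v$ and $v_iv_j-v_jv_i=0$, not because ``$\s_v$ fixes $v$'' --- in fact $\s_v$ sends $v$ to $-v$.)
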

 
\begin{proof}
Let us first assume that $\k_v \ge 1$, so that $h_\k(x)$ is continuously differentiable. In
particular, we have then 
$$
  \partial_i \left(g(x) h_\k^2(x)\right) = (\partial_i g(x))  h_\k^2(x) + g(x) 
     \sum_{v \in R_+} 2 \k_v \frac{h_\k^2(x) }{\la x,v\ra} v_i, 
$$
which yields immediately the relation
$$
  D_{i,j} \left(g(x) h_\k^2(x)\right) = D_{i,j} g(x)  h_\k^2(x) + g(x) 
     \sum_{v \in R_+} 2 \k_v \frac{h_\k^2(x) } {\la x,v\ra} (x_i v_j - x_j v_i). 
$$
By the identity \eqref{Dij=Dij}, the differential part of $\CD_{i,j}$ then satisfies 
\begin{align*}
 \int_{\sph} D_{i,j} f(x)  g(x) h_\k^2(x) d\s(x) = & - \int_{\sph}  f(x) D_{i,j} \left(g(x) h_\k^2(x)\right) d\s(x) \\ 
  = & - \int_{\sph}  f(x) D_{i,j} g(x) h_\k^2(x) d\s(x) \\
       & - 2 \int_{\sph}  f(x) g(x) \sum_{v \in R_+} \k_v \frac{x_i v_j - x_j v_i}{\la x,v\ra} h_\k^2(x) d\s(x). 
\end{align*}

Next we consider $E_{i,j}$ term. By the definition, we have 
\begin{align*}
 \int_{\sph} E_{i,j} f(x) g(x) h_\k^2(x) d\s(x) = &  \sum_{v \in R_+} \k_v  \left[ 
    \int_{\sph} (x_i v_j - x_j v_i) \frac{f(x) g(x)}{\la x, v \ra} h_\k^2(x) d\s(x) \right. \\
 & \qquad -  \left.  \int_{\sph} (x_i v_j - x_j v_i) \frac{f(x\s_v) g(x)}{\la x, v \ra} h_\k^2(x) d\s(x)  \right].
\end{align*}
In the last integral in the right hand side, we make a change of variables $x \mapsto x \s_v$. 
Since $h_\k^2(x)$ is invariant under the reflection group, $\la x \s_v, v\ra = \la x, v \s_v \ra = - \la x,v \ra$
by the definition of $\s_v$ and 
$$
  (x \s_v)_i v_j - (x\s_v)_j v_i = x_i v_j -x_j v_i - \frac{2 \la x, v\ra}{\|v\|^2} (v_i v_j - v_jv_i) = x_i v_j -x_j v_i,
$$
we see that  
$$
   \int_{\sph} (x_i v_j - x_j v_i) \frac{f(x\s_v) g(x)}{\la x, v \ra} h_\k^2(x) d\s   
     =  - \int_{\sph} (x_i v_j - x_j v_i) \frac{f(x) g(x \s_v)}{\la x, v \ra} h_\k^2(x) d\s. 
$$
Consequently,  the difference  part of $\CD_{i,j}$ satisfies
\begin{align*}
 \int_{\sph} E_{i,j} f(x) g(x) h_\k^2(x) d\s(x) = &  -  \int_{\sph} f(x) E_{i,j} g(x) h_\k^2(x) d\s(x) \\
    & +2 \sum_{v \in R_+} \k_v  \int_{\sph} f(x) g(x)
     \frac{x_i v_j - x_j v_i}{\la x, v \ra} h_\k^2(x) d\s(x).
\end{align*} 
By \eqref{decompCDij}, summing up the integral relations for $D_{i,j}$ and $E_{i,j}$ proves
\eqref{CDij=CDij}. This completes the proof under the assumption of $\k_v \ge 1$. The general 
case of $\k_v \ge 0$ follows from analytic continuation. 
\end{proof} 

Our next proposition gives an expression of $\nabla_{h,0}$ in terms of $\CD_{i,j}$. 

\begin{prop}\label{lemma1}
The $j$th component of $\nabla_{h,0}$ satisfies
\begin{equation}\label{nabla_h0j}
 (\nabla_{h,0})_j f(\xi) =\sum_{i=1, i \ne j}^d \xi_i \CD_{i,j} f(\xi) + \xi_j (\xi \cdot \nabla_{h,0}),  
\quad \xi \in \sph. 
\end{equation}
\end{prop}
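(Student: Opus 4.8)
The plan is to start from the analogue of the Euler-type identity for the ordinary gradient and adapt it to the Dunkl setting. Recall that for the ordinary spherical gradient one has the classical identity $(\nabla_0)_j f(\xi) = \sum_{i \ne j} \xi_i D_{i,j} f(\xi)$ on $\sph$, which comes from writing $\partial_j = \xi_j \frac{d}{dr} + \frac{1}{r}(\nabla_0)_j$ and using $\sum_i \xi_i D_{i,j} = \sum_i \xi_i(x_i \partial_j - x_j \partial_i)/r = \xi_j - \xi_j(\xi\cdot\nabla_0)\cdot(\text{something})$; more precisely $\sum_{i} x_i(x_i\partial_j - x_j\partial_i) = \|x\|^2 \partial_j - x_j (x\cdot\nabla)$, which on $\sph$ (after dividing by $r$) becomes the radial/tangential split. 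I would reproduce this computation with $\CD$ in place of $\partial$ and $\CD_{i,j}$ in place of $D_{i,j}$.

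First I would compute $\sum_{i=1}^d x_i \CD_{i,j} = \sum_{i=1}^d x_i(x_i \CD_j - x_j \CD_i) = \|x\|^2 \CD_j - x_j (x\cdot \nabla_h)$, where $x \cdot \nabla_h = \sum_i x_i \CD_i$; note the $i=j$ term of the sum vanishes identically, so $\sum_{i \ne j} x_i \CD_{i,j} = \|x\|^2 \CD_j - x_j (x \cdot \nabla_h)$ as well. Next I would pass to polar coordinates. Using \eqref{eq:nabla_h}, the $j$th component reads $\CD_j = \xi_j \frac{d}{dr} + \frac{1}{r}(\nabla_{h,0})_j$, and $x \cdot \nabla_h = r \frac{d}{dr} + \xi \cdot \nabla_{h,0}$ (the radial part being $r\frac{d}{dr}$ since $\xi\cdot\xi = 1$, plus the tangential part). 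Substituting $x = r\xi$, $\|x\|^2 = r^2$, $x_j = r\xi_j$ into $\|x\|^2 \CD_j - x_j(x\cdot\nabla_h)$ and dividing by $r$, the radial $\frac{d}{dr}$ terms cancel: $r\xi_j\frac{d}{dr} - \xi_j \cdot r\frac{d}{dr} = 0$, leaving $\frac{1}{r}\sum_{i\ne j} x_i \CD_{i,j} = (\nabla_{h,0})_j - \xi_j(\xi\cdot\nabla_{h,0})$. Since $\CD_{i,j}$ is homogeneous of degree zero (it maps degree-$n$ to degree-$n$, so its action on a function of $\xi$ alone is well defined), $\frac{1}{r}x_i\CD_{i,j} f(\xi) = \xi_i \CD_{i,j} f(\xi)$, and rearranging gives exactly \eqref{nabla_h0j}.

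The main obstacle I anticipate is bookkeeping around the difference part: one must make sure that $\CD_{i,j}$, $\CD_j$, and $x\cdot\nabla_h$ genuinely satisfy the same polar-coordinate decompositions as their classical counterparts, and in particular that $\CD_{i,j}$ is a well-defined operator on functions of $\xi$ alone (equivalently, that it commutes appropriately with radial scaling so that $x_i \CD_{i,j} f(r\xi) = r \xi_i \CD_{i,j}f(\xi)$ when $f$ depends only on $\xi$). This follows because each $E_v$ and each $\partial_i$ interacts with the radial variable exactly as recorded in Lemma~\ref{lem:nabla_h0} and \eqref{eq:nabla_h}, and the combination $x_i\CD_j - x_j\CD_i$ kills the net radial weight. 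Once that homogeneity point is checked, the identity \eqref{nabla_h0j} is a purely algebraic rearrangement, and no analytic subtlety (no analytic continuation in $\k$, no integration) is needed.
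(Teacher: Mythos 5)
Your proof is correct, but it takes a genuinely different route from the paper's. The paper decomposes $\CD_{i,j}=D_{i,j}+E_{i,j}$ as in \eqref{decompCDij}, quotes the known classical identity $(\nabla_0)_jf(\xi)=\sum_{i\ne j}\xi_iD_{i,j}f(\xi)$, and then computes the difference contribution $\sum_i\xi_iE_{i,j}f(\xi)=\sum_{v\in R_+}\k_vE_vf(\xi)(v_j-\xi_j\la\xi,v\ra)$ directly, matching it against the extra term $\sum_v\k_vE_vf(\xi)\,v_j$ in \eqref{eq:nabla_h0} and absorbing the leftover into $\xi_j(\xi\cdot\nabla_{h,0})$ via \eqref{xi-na_h0}. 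You instead prove the exact operator identity $\sum_{i\ne j}x_i\CD_{i,j}=\|x\|^2\CD_j-x_j(x\cdot\nabla_h)$ and feed it through the polar decomposition \eqref{eq:nabla_h}, letting the radial terms cancel; this treats the differential and difference parts uniformly, never splits $\CD_{i,j}$, and in effect re-derives the classical identity and its Dunkl correction in one stroke. Your approach is arguably cleaner and makes transparent why the new term $\xi_j(\xi\cdot\nabla_{h,0})$ appears (it is exactly the failure of $\xi\cdot\nabla_{h,0}$ to vanish), at the cost of having to verify the polar form of $x\cdot\nabla_h$ and the degree-zero homogeneity of $\CD_{i,j}f$ for $f$ depending only on $\xi$ --- both of which you address, and the latter of which can be bypassed entirely by simply evaluating the operator identity at $r=1$. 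The paper's route has the advantage of reusing the classical fact verbatim and of exhibiting the difference part explicitly, which is the pattern followed in the surrounding propositions.
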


\begin{proof}
For the classical spherical gradient $\nabla_0$, we know that \cite[p. 25]{DaiXbook}
$$
  (\nabla_0)_j f(\xi) = \sum_{i=1, i \ne j}^d \xi_i D_{i,j} f(\xi), \qquad \xi \in \sph.
$$
By \eqref{eq:nabla_h0}, we can then write
$$
    (\nabla_{h,0})_j f(\xi)  = \sum_{i=1, i \ne j}^d \xi_i D_{i,j} f(\xi) + \sum_{v \in R_+} \k_v E_v f(\xi) v_j. 
$$
By the definition of $E_{i,j}$ and setting $E_{i,i}:= 0$, we have 
\begin{align*}
   \sum_{i=1}^d \xi_i E_{i,j} f(\xi) = \sum_{v \in R_+} \k_v E_v f(\xi) \sum_{i=1}^d \xi_i (\xi_i v_j - \xi_j v_i) 
     = \sum_{v \in R_+} \k_v E_v f(\xi) (v_j - \xi_j \la \xi, v\ra), 
\end{align*}   
which leads to, upon rearranging and using the definition of $E_v$, that 
$$     
  \sum_{v \in R_+} \k_v E_v f(\xi) v_j =  \sum_{i=1}^d \xi_i E_{i,j} f(\xi) 
          + \xi_j \sum_{v\in R_+} \k_v (f(\xi) - f(\xi \s_v)).  
$$
Since $\CD_{i,j} = D_{i,j} + E_{i,j}$, \eqref{nabla_h0j} follows from the above relations and
\eqref{xi-na_h0}.
\end{proof}

Our next goal is to derive an adjoint of $\nabla_{h,0}$, for which we need a lemma whose proof
relies on the following identity  (\cite[p. 156]{DX}): For $1 \le i,j \le d$, 
\begin{equation} \label{eq:CDxjf}
  \CD_i (x_j f(x)) = x_j \CD_i f(x) + \delta_{i,j} f(x) + 2 \sum_{v \in R_+} \k_v \frac{v_i v_j}{\|v\|^2} f(x \s_v).
\end{equation}

\begin{lem} \label{lemma2}
For $1 \le i < j \le d$, 
$$
  \sum_{i=1, i \ne j}^d \CD_{i,j} (\xi_j g(\xi)) = (\nabla_{h,0})_j g(\xi) - (\g_\k + d-1) \xi_j g(\xi) 
       - \sum_{v \in R_+} \k_v  (\xi \s_v)_j g(\xi \s_v).
$$
\end{lem}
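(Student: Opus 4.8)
The plan is to compute $\sum_{i=1,\,i\ne j}^d \CD_{i,j}(\xi_j g(\xi))$ directly from the definition $\CD_{i,j} = x_i\CD_j - x_j\CD_i$, moving the factor $\xi_j$ past the Dunkl operators using identity \eqref{eq:CDxjf}, and then recognizing the resulting differential-difference expression as $(\nabla_{h,0})_j g$ plus lower-order correction terms via Proposition~\ref{lemma1}. First I would write, for each $i \ne j$,
$$
 \CD_{i,j}(\xi_j g(\xi)) = \xi_i \CD_j(\xi_j g(\xi)) - \xi_j \CD_i(\xi_j g(\xi)),
$$
and apply \eqref{eq:CDxjf} to both terms: $\CD_j(\xi_j g) = \xi_j\CD_j g + g + 2\sum_{v\in R_+}\k_v \tfrac{v_j^2}{\|v\|^2} g(\xi\s_v)$ (here $\delta_{j,j}=1$), while $\CD_i(\xi_j g) = \xi_j\CD_i g + 2\sum_{v\in R_+}\k_v \tfrac{v_iv_j}{\|v\|^2} g(\xi\s_v)$ (here $\delta_{i,j}=0$ since $i\ne j$). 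Substituting and collecting, the main term becomes $\sum_{i\ne j}\xi_i(\xi_i\CD_j - \xi_j\CD_i)g = \sum_{i\ne j}\xi_i\CD_{i,j}g$, which by Proposition~\ref{lemma1} equals $(\nabla_{h,0})_j g(\xi) - \xi_j(\xi\cdot\nabla_{h,0}g)$; and by \eqref{xi-na_h0} the subtracted term is $\xi_j\sum_{v\in R_+}\k_v[g(\xi)-g(\xi\s_v)]$.

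The remaining bookkeeping is the reflection-term arithmetic. From the $\CD_j(\xi_j g)$ pieces we pick up $\sum_{i\ne j}\xi_i\cdot g(\xi) = -\xi_j g(\xi)$ (using $\sum_i \xi_i^2 = 1$ on the sphere, so $\sum_{i\ne j}\xi_i^2 = 1-\xi_j^2$ — wait, more carefully: the coefficient is $\sum_{i\ne j}\xi_i\cdot 1$? no — re-examining, the $g$-term in $\CD_j(\xi_jg)$ is multiplied by $\xi_i$ and summed, giving $g(\xi)\sum_{i\ne j}\xi_i$, which is not obviously simple). The cleaner route is to keep everything in terms of $\xi_i$-weighted sums: the reflection contributions assemble into $2\sum_{v\in R_+}\k_v g(\xi\s_v)\sum_{i\ne j}\xi_i\big(\tfrac{v_j^2}{\|v\|^2}\xi_i - \xi_j\tfrac{v_iv_j}{\|v\|^2}\big) + \text{(the non-reflection }g\text{-term)}$, and then simplify $\sum_{i\ne j}(\xi_i^2 v_j - \xi_i\xi_j v_i)\tfrac{v_j}{\|v\|^2}$ by adding and subtracting the $i=j$ term: $\sum_{i=1}^d(\xi_i^2 v_j - \xi_i\xi_j v_i) = v_j - \xi_j\la\xi,v\ra$, so subtracting the $i=j$ entry $\xi_j^2 v_j - \xi_j^2 v_j = 0$ changes nothing, giving $(v_j - \xi_j\la\xi,v\ra)\tfrac{v_j}{\|v\|^2}$. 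Combined with $\la\xi\s_v,v\ra = -\la\xi,v\ra$ and the relation $(\xi\s_v)_j = \xi_j - 2\la\xi,v\ra v_j/\|v\|^2$, the terms should collapse so that $2\sum_v\k_v\tfrac{v_j}{\|v\|^2}(v_j-\xi_j\la\xi,v\ra)g(\xi\s_v) = \sum_v\k_v[(\xi\s_v)_j - \xi_j]g(\xi\s_v)\cdot(\pm 1)$ up to a clean rewriting, matching $-\sum_v\k_v(\xi\s_v)_j g(\xi\s_v)$ after accounting for the $+\xi_j\sum_v\k_v g(\xi\s_v)$ already present.

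The main obstacle will be this last simplification: correctly matching the reflection terms so that the coefficient of $g(\xi)$ works out to $-(\g_\k + d - 1)\xi_j$ and the coefficient of each $g(\xi\s_v)$ works out to exactly $-\k_v(\xi\s_v)_j$. The $-(d-1)$ comes from the sum $\sum_{i\ne j}\xi_i^2 = (1-\xi_j^2)$-type normalizations together with the $\xi_j g$ pieces, the $-\g_\k\,\xi_j$ comes from collecting all the $+\xi_j\k_v g(\xi)$ contributions against $\g_\k = \sum_v\k_v$, and the cross terms $-2\xi_j\la\xi,v\ra v_j/\|v\|^2$ times $g(\xi\s_v)$ are precisely what upgrades $-\xi_j g(\xi\s_v)$ to $-(\xi\s_v)_j g(\xi\s_v)$. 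I would verify the scalar identities on the sphere ($\|\xi\|=1$) carefully and, as in the previous proposition, first assume $\k_v\ge 1$ if any differentiation of $h_\k$ is implicitly needed — though here the identity is purely algebraic in the operators, so no analytic continuation should be required.
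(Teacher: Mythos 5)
Your overall strategy---expand each $\CD_{i,j}(\cdot)$ via \eqref{eq:CDxjf}, identify the main term through Proposition~\ref{lemma1}, and clean up the reflection terms---is exactly the paper's route, but your execution does not close, and the reason is worth isolating. The quantity for which the identity actually holds, and the one used later in the proof of \eqref{nabla_h0-adjoint} (where $I_1$ is integrated against $\sum_i \CD_{i,j}(\xi_i g(\xi))$), is $\sum_{i\ne j}\CD_{i,j}(\xi_i g(\xi))$, not $\sum_{i\ne j}\CD_{i,j}(\xi_j g(\xi))$ as printed in the statement; with $\xi_j$ inside, the asserted identity is false. Your own computation shows the symptoms. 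From $\CD_{i,j}(\xi_j g)=\xi_i\CD_j(\xi_j g)-\xi_j\CD_i(\xi_j g)$ and your (correct) expansions, the main term is $\xi_j(\xi_i\CD_j-\xi_j\CD_i)g=\xi_j\CD_{i,j}g$, not the $\xi_i\CD_{i,j}g$ you assert, so Proposition~\ref{lemma1} does not apply to it; the $\delta_{j,j}$ term contributes $g(\xi)\sum_{i\ne j}\xi_i$, which, as you yourself observe, does not simplify; and your reassembled reflection sum $\sum_{i\ne j}\xi_i\bigl(\xi_i v_j^2-\xi_j v_iv_j\bigr)/\|v\|^2$ carries an extra factor of $\xi_i$ that is not produced by the expansion you wrote down. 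The ``should collapse'' step therefore cannot be completed as written.

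The fix is to run the same computation on $\CD_{i,j}(\xi_i g)$. Then \eqref{eq:CDxjf} gives, for $i\ne j$,
$\CD_{i,j}(\xi_i g)=\xi_i\CD_{i,j}g-\xi_j g+2\sum_{v\in R_+}\k_v\|v\|^{-2}(\xi_iv_j-\xi_jv_i)v_i\,g(\xi\s_v)$.
Summing over $i\ne j$, the first term becomes $(\nabla_{h,0})_jg-\xi_j(\xi\cdot\nabla_{h,0})g$ by Proposition~\ref{lemma1}, the second gives $-(d-1)\xi_jg$, and the reflection term is handled by $\sum_{i=1}^d(\xi_iv_j-\xi_jv_i)v_i=v_j\la\xi,v\ra-\xi_j\|v\|^2$ (the $i=j$ summand vanishes, so restricting to $i\ne j$ costs nothing) together with $(\xi\s_v)_j=\xi_j-2\la\xi,v\ra v_j/\|v\|^2$, yielding $-\sum_{v\in R_+}\k_v\bigl(\xi_j+(\xi\s_v)_j\bigr)g(\xi\s_v)$. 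Finally \eqref{xi-na_h0} converts $-\xi_j(\xi\cdot\nabla_{h,0})g$ into $-\g_\k\xi_jg+\xi_j\sum_{v}\k_vg(\xi\s_v)$, the latter cancels against part of the reflection contribution, and the stated right-hand side follows. You are right that the identity is purely algebraic, so no restriction to $\k_v\ge1$ or analytic continuation is needed.
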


\begin{proof}
Using the identity in the previous lemma, it is easy to see that 
$$
\CD_{i,j} (\xi_i g(\xi)) = \xi_j \CD_{i,j}g(\xi) - \xi_i g(\xi)+ 2 \sum_{v \in R_+} \k_v \frac{\xi_iv_j-\xi_jv_i}{\|v\|^2} 
  v_j g(\xi \s_v). 
$$
By $\sum_{j=1}^d (\xi_iv_j-\xi_jv_i) v_i = v_j \la \xi ,v\ra- \xi_j \|v\|^2 $ and the definition of 
$\xi \s_v$, we obtain
\begin{align*}
 \sum_{i=1, i\ne j }^d \CD_{i,j} (\xi g(\xi)) = & \sum_{i=1, i\ne j}^d \xi_i D_{i,j} g(\xi) - (d-1) \xi_j g(\xi)
      -  \sum_{v \in R_+} \k_v \left(\xi_j+ (\xi \s_v)_j \right)    g(\xi \s_v).
\end{align*}
Applying the identity \eqref{nabla_h0j} one more time shows that 
\begin{align*}
 \sum_{i=1, i\ne j }^d \CD_{i,j} (\xi g(\xi)) = (\nabla_{h,0})_j g(\xi) - (d-1) \xi_j g(\xi) 
  - \sum_{v\in R_+} \k_v \left (\xi_j g(\xi) + (\xi \s_v)_j  g(\xi \s_v) \right ),
\end{align*} 
which becomes the stated identity if we break the last sum as two sums and recognize that 
the first one is equal to $\g_\k \xi_j g(\xi)$. The proof is completed. 
\end{proof}

We are now in a position to identify the adjoint operator of $\nabla_{h,0}$ in $L^2(\sph, h_\k^2)$, 
which is given in the following integration by parts formula. 

\begin{prop}
For $f, g \in C^1(\sph)$, 
\begin{align} \label{nabla_h0-adjoint}
   & \int_{\sph} \nabla_{h,0} f(\xi) g(\xi) h_\k^2(\xi)d \s(\xi) \\
    & \qquad  = -  \int_{\sph}  f(\xi) \left[\nabla_{h,0} g(\xi) - (2 \l_\k+1)\xi  g(\xi) \right] h_\k^2(\xi)d \s(\xi).  
    \notag
\end{align}
\end{prop}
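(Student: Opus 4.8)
The plan is to compute the $j$th component of the left-hand side of \eqref{nabla_h0-adjoint} by invoking the expansion of $(\nabla_{h,0})_j$ from Proposition~\ref{lemma1} and then moving every operator off $f$ using the integration by parts formula \eqref{CDij=CDij} for the $\CD_{i,j}$. Concretely, write
\begin{equation*}
   (\nabla_{h,0})_j f(\xi) = \sum_{i=1, i\ne j}^d \xi_i \CD_{i,j} f(\xi) + \xi_j \big(\xi \cdot \nabla_{h,0} f(\xi)\big),
\end{equation*}
multiply by $g(\xi) h_\k^2(\xi)$ and integrate. The second term is harmless: using \eqref{xi-na_h0} it is an explicit difference expression in $f$ against $g$, which we keep aside. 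For the first term, the integrand is $\CD_{i,j} f(\xi)$ against $\xi_i g(\xi)$, so \eqref{CDij=CDij} applies with the roles of $f$ and $\xi_i g$, giving $-\int_{\sph} f(\xi)\,\CD_{i,j}(\xi_i g(\xi))\, h_\k^2 d\s$. Summing over $i \ne j$ produces exactly the quantity computed in Lemma~\ref{lemma2}.

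The second step is to substitute the conclusion of Lemma~\ref{lemma2},
\begin{equation*}
   \sum_{i=1, i\ne j}^d \CD_{i,j}(\xi_j g(\xi)) = (\nabla_{h,0})_j g(\xi) - (\g_\k + d-1)\xi_j g(\xi) - \sum_{v\in R_+} \k_v (\xi\s_v)_j g(\xi\s_v),
\end{equation*}
so that after the integration by parts the left-hand side becomes
\begin{equation*}
   -\int_{\sph} f(\xi) (\nabla_{h,0})_j g(\xi) h_\k^2 d\s + (\g_\k+d-1)\int_{\sph} f(\xi)\xi_j g(\xi) h_\k^2 d\s + \int_{\sph} f(\xi)\sum_{v\in R_+}\k_v(\xi\s_v)_j g(\xi\s_v) h_\k^2 d\s,
\end{equation*}
plus the leftover $\int_{\sph} f(\xi)\xi_j\big(\xi\cdot\nabla_{h,0} f\big)$... — more precisely the leftover term carrying $\xi\cdot\nabla_{h,0}$, which by \eqref{xi-na_h0} equals $\int_{\sph} \xi_j g(\xi) \sum_{v\in R_+}\k_v[f(\xi)-f(\xi\s_v)] h_\k^2 d\s$.

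The third step is the bookkeeping that collapses all the difference terms. In the integral $\int_{\sph} f(\xi)\sum_v \k_v(\xi\s_v)_j g(\xi\s_v) h_\k^2 d\s$, change variables $\xi\mapsto \xi\s_v$ in each summand; since $h_\k^2$ and $d\s$ are $\s_v$-invariant and $(\xi\s_v)\s_v=\xi$, this becomes $\int_{\sph} \sum_v \k_v f(\xi\s_v)\xi_j g(\xi) h_\k^2 d\s$. This is precisely the negative of the $-f(\xi\s_v)$ piece coming from the leftover $\xi\cdot\nabla_{h,0}$ term, so those two cancel. What survives from the difference terms is then $\int_{\sph} \xi_j g(\xi)\sum_v\k_v f(\xi) h_\k^2 d\s = \g_\k\int_{\sph} f(\xi)\xi_j g(\xi) h_\k^2 d\s$, which combines with the $(\g_\k+d-1)$ coefficient to give $(2\g_\k+d-1)\int_{\sph} f\xi_j g\, h_\k^2 d\s = (2\l_\k+1)\int_{\sph} f\xi_j g\, h_\k^2 d\s$ by \eqref{eq:lambda_k}. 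Assembling the $j$th components into a vector identity yields \eqref{nabla_h0-adjoint}. I expect the main obstacle to be purely organizational: keeping consistent track of the signs and of which difference terms arise from \eqref{xi-na_h0} versus from Lemma~\ref{lemma2}, and making sure the change-of-variables cancellation is applied to the correct summand; there is no analytic difficulty since $f,g\in C^1(\sph)$ and, as in the proof of \eqref{CDij=CDij}, the case $\k_v\ge 1$ can be treated first with $\k_v\ge 0$ following by analytic continuation if one prefers to differentiate $h_\k^2$ directly.
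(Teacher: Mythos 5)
Your proof is correct and follows essentially the same route as the paper: decompose $(\nabla_{h,0})_j f$ via \eqref{nabla_h0j}, integrate the $\CD_{i,j}$ part by parts using \eqref{CDij=CDij} and Lemma~\ref{lemma2}, change variables $\xi\mapsto\xi\s_v$ to cancel the reflection terms, and combine $\g_\k+d-1+\g_\k=2\l_\k+1$. The only blemish, the wavering between $\CD_{i,j}(\xi_i g)$ and $\CD_{i,j}(\xi_j g)$, is inherited from a typo in the paper's own statement of Lemma~\ref{lemma2} and does not affect the argument.
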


\begin{proof}
Using the identity \eqref{nabla_h0j} we can decompose the $j$-th component of the left hand side of \eqref{nabla_h0-adjoint} as a sum of two integrals,  
\begin{align*}
  I_1 & =  \int_{\sph}  \sum_{i=1}^d \xi_i \CD_{i,j} f(\xi) g(\xi) h_\k^2(\xi)d \s (\xi), \\ 
  I_2 & =  \int_{\sph}  \sum_{v \in R_+} \k_v \xi_j ( f(\xi) - f(\xi \s_v)) g(\xi) h_\k^2(\xi)d \s (\xi).
\end{align*}
For $I_1$ we use the identity \eqref{CDij=CDij} and Lemma \ref{lemma2} to deduce 
\begin{align*}
  I_1 & =  -  \int_{\sph}  f(\xi)  \sum_{i=1}^d \CD_{i,j} (\xi_i g(\xi))  h_\k^2(\xi)d \s (\xi) \\
        & =   -  \int_{\sph}  f(\xi)  (\nabla_{h,0})_j g(\xi)  h_\k^2(\xi)d \s (\xi) + 
           (\g_k + d-1)  \int_{\sph} \xi_j  f(\xi) g(\xi)  h_\k^2(\xi)d \s (\xi)\\
             &  \quad + \sum_{v \in R_+} \k_v  \int_{\sph} f(\xi) (\xi \s_v)_j g(\xi \s_v)  h_\k^2(\xi)d \s (\xi),        
\end{align*}
whereas for $I_2$, we break the integral as a sum of two integrals and change variable 
$\xi \mapsto \xi \s_v$ in the second one to obtain 
\begin{align*}
  I_2 & =   \int_{\sph}  \sum_{v \in R_+} \k_v \xi_j f(\xi) g(\xi) h_\k^2(\xi)d \s (\xi) - \int_{\sph}  \sum_{v \in R_+} \k_v \xi_i f(\xi \s_v) g(\xi) h_\k^2(\xi)d \s (\xi) \\
     & = \g_\k  \int_{\sph} \xi_j f(\xi) g(\xi) h_\k^2(\xi)d \s (\xi) - \sum_{v \in R_+} \k_v  \int_{\sph} f(\xi) 
         (\xi \s_v)_j g(\xi \s_v)  h_\k^2(\xi)d \s (\xi). 
\end{align*}
Adding the expressions for $I_1$ and $I_2$ proves \eqref{nabla_h0-adjoint}.
\end{proof}

Our next two propositions give further connections between the operators $\CD_{i,j}$, the $h$-spherical 
gradient $\nabla_{h,0}$ and $h$-Laplace-Beltrami operator $\Delta_{h,0}$.  

\begin{prop}
The operators $\CD_{i,j}$ and $\nabla_{h,0}$ satisfy the relation 
\begin{equation} \label{nabla_h0=CDij}
   \nabla_{h,0} f(\xi) \cdot  \nabla_{h,0} g(\xi) + (\xi \cdot \nabla_{h,0} f(\xi) ) (\xi \cdot \nabla_{h,0} g(\xi) )
   = \sum_{1 \le i < j \le d} \CD_{i,j} f(\xi) \CD_{i,j}g(\xi)
\end{equation}
for $\xi \in \sph$. In particular, 
\begin{equation} \label{nabla_h0=CDij2}
   |\nabla_{h,0} f(\xi) |^2  + |\xi \cdot \nabla_{h,0} f(\xi)|^2= \sum_{1 \le i < j \le d} |\CD_{i,j} f(\xi)|^2, \qquad    \xi \in \sph. 
\end{equation}

\end{prop}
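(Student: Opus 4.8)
The plan is to reduce the identity \eqref{nabla_h0=CDij} to the known classical identity \eqref{nabla=Dij} by systematically accounting for the difference terms $E_v$. Starting from \eqref{eq:nabla_h0}, write $\nabla_{h,0} = \nabla_0 + P$, where $P f(\xi) = \sum_{v\in R_+}\k_v E_v f(\xi)\, v$. Then expand the left-hand side of \eqref{nabla_h0=CDij} as a sum of four pieces: the purely classical piece $\nabla_0 f\cdot\nabla_0 g$, the two cross terms $\nabla_0 f\cdot Pg + Pf\cdot\nabla_0 g$, and the piece $Pf\cdot Pg$; similarly split off the $(\xi\cdot\nabla_{h,0}f)(\xi\cdot\nabla_{h,0}g)$ term, noting $\xi\cdot\nabla_0 f = 0$ so that $\xi\cdot\nabla_{h,0}f = \xi\cdot Pf = \sum_v\k_v(f(\xi)-f(\xi\s_v))$ by \eqref{xi-na_h0}. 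On the right-hand side, use the decomposition $\CD_{i,j} = D_{i,j} + E_{i,j}$ from \eqref{decompCDij} and expand $\sum_{i<j}\CD_{i,j}f\,\CD_{i,j}g$ analogously into a $D_{i,j}D_{i,j}$ part, two cross parts, and an $E_{i,j}E_{i,j}$ part. The classical identity \eqref{nabla=Dij} immediately matches the $\nabla_0 f\cdot\nabla_0 g$ term with the $\sum_{i<j}D_{i,j}f\,D_{i,j}g$ term, so those cancel out of the comparison.

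The heart of the argument is then two bookkeeping identities for the remaining terms. First, for the cross terms, I would show
\[
   \sum_{1\le i<j\le d} \big(D_{i,j}f\, E_{i,j}g + E_{i,j}f\, D_{i,j}g\big)
   = \nabla_0 f\cdot Pg + Pf\cdot\nabla_0 g,
\]
and second, for the pure-difference terms,
\[
   \sum_{1\le i<j\le d} E_{i,j}f\, E_{i,j}g
   = Pf\cdot Pg + (\xi\cdot Pf)(\xi\cdot Pg).
\]
Granting these two, adding them to the cancelled classical identity gives exactly \eqref{nabla_h0=CDij}. Each of these reduces, after substituting $E_{i,j} = \sum_{v}\k_v(x_i v_j - x_j v_i)E_v$ and restricting to $|\xi|=1$, to an algebraic identity over pairs $v,w\in R_+$ involving the antisymmetric expressions $\xi_i v_j - \xi_j v_i$. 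The basic computational lemma I expect to need is the Lagrange-type identity
\[
   \sum_{1\le i<j\le d}(\xi_i v_j-\xi_j v_i)(\xi_i w_j - \xi_j w_i)
   = \la v,w\ra - \la \xi,v\ra\la\xi,w\ra \quad\text{for }\xi\in\sph,
\]
which is exactly the content of $\sum_{i<j}(a\wedge b)_{ij}(c\wedge d)_{ij}$ via the Binet--Cauchy formula with $\xi$ unit. This produces the $\la v,w\ra$ piece that assembles into $Pf\cdot Pg$ and the $\la\xi,v\ra\la\xi,w\ra$ piece that, combined with $\la\xi,v\ra E_vf = f(\xi)-f(\xi\s_v)$, assembles into $(\xi\cdot Pf)(\xi\cdot Pg)$; the full-sum-minus-diagonal structure $\sum_{i<j} = \frac12(\sum_{i,j} - \sum_{i=j})$ is what introduces the correction term on the right, explaining why \eqref{nabla_h0=CDij} carries the extra $(\xi\cdot\nabla_{h,0}f)(\xi\cdot\nabla_{h,0}g)$ relative to the classical \eqref{nabla=Dij}. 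For the cross-term identity, one similarly uses $\sum_{i}\xi_i(\xi_i v_j - \xi_j v_i) = v_j - \xi_j\la\xi,v\ra$ (already computed in the proof of Proposition \ref{lemma1}) together with the classical componentwise formula $(\nabla_0)_j f = \sum_i \xi_i D_{i,j}f$.

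The main obstacle is the second identity, the pure-difference one: here one must carefully track the diagonal terms $E_{i,i}:=0$ versus the off-diagonal ones and verify that the piece not captured by $Pf\cdot Pg$ is precisely $(\xi\cdot Pf)(\xi\cdot Pg)$ and not some other combination of $\la\xi,v\ra$-weighted products; in particular one must make sure there is no leftover term mixing $f(\xi)$ with $g(\xi\s_w)$ asymmetrically. I would handle this by fixing $v,w\in R_+$, writing out the coefficient of $\k_v\k_w E_vf\,E_wg$ on both sides using the Binet--Cauchy identity above, and checking the two sides agree term-by-term; summing over $v,w$ then finishes it. The statement \eqref{nabla_h0=CDij2} follows by setting $g=f$ and recalling that the operators $\CD_{i,j}$ are real, so $\CD_{i,j}f\,\CD_{i,j}f = |\CD_{i,j}f|^2$ and likewise $\nabla_{h,0}f\cdot\nabla_{h,0}f = |\nabla_{h,0}f|^2$ and $(\xi\cdot\nabla_{h,0}f)^2 = |\xi\cdot\nabla_{h,0}f|^2$.
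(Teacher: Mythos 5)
Your strategy is the same as the paper's: write $\nabla_{h,0}=\nabla_0+P$ with $Pf=\sum_{v\in R_+}\k_v E_vf\,v$ and $\CD_{i,j}=D_{i,j}+E_{i,j}$, match the pure differential parts via \eqref{nabla=Dij}, treat the cross terms using $\xi_iv_j-\xi_jv_i=D_{i,j}\la \xi,v\ra$ together with \eqref{nabla0x.v} and $\xi\cdot\nabla_0f=0$, and treat the pure difference part with the Lagrange/Binet--Cauchy identity. Your first bookkeeping identity (the cross terms) is correct and is proved exactly as you indicate.

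The gap is in your second bookkeeping identity, and it is precisely the point you flag as ``the main obstacle'': the sign comes out wrong, and it is forced by the very lemma you quote. Substituting $E_{i,j}=\sum_v\k_v(\xi_iv_j-\xi_jv_i)E_v$ and using
\[
\sum_{1\le i<j\le d}(\xi_iv_j-\xi_jv_i)(\xi_iw_j-\xi_jw_i)=\la v,w\ra-\la\xi,v\ra\la\xi,w\ra ,
\]
together with $\la\xi,v\ra E_vf=f(\xi)-f(\xi\s_v)$, gives
\[
\sum_{1\le i<j\le d}E_{i,j}f\,E_{i,j}g=Pf\cdot Pg-(\xi\cdot Pf)(\xi\cdot Pg),
\]
with a \emph{minus} sign, not the plus sign you assert; there is no asymmetric leftover, but the correction enters negatively. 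Carried through your scheme this yields $\sum_{i<j}\CD_{i,j}f\,\CD_{i,j}g=\nabla_{h,0}f\cdot\nabla_{h,0}g-(\xi\cdot\nabla_{h,0}f)(\xi\cdot\nabla_{h,0}g)$, which differs from \eqref{nabla_h0=CDij} in the sign of the second term, so your route cannot produce the statement as printed. A direct check confirms the minus sign is the correct one: take $d=2$, $G=\ZZ_2$, $R_+=\{e_1\}$, $f=g=x_1$; then $E_{e_1}f=2$, $\CD_{1,2}f=-(1+2\k_1)\xi_2$, $|\nabla_{h,0}f|^2=\xi_2^2+4\k_1\xi_2^2+4\k_1^2$, $\xi\cdot\nabla_{h,0}f=2\k_1\xi_1$, and one finds $|\nabla_{h,0}f|^2-|\xi\cdot\nabla_{h,0}f|^2=(1+2\k_1)^2\xi_2^2=|\CD_{1,2}f|^2$, while the sum with a plus sign overshoots by $8\k_1^2\xi_1^2$. (For $G$-invariant $f$ the two versions coincide, since then $\xi\cdot\nabla_{h,0}f=0$; that is the only case used in the paper's main theorems.) For comparison, the paper's own evaluation of $\sum_{i<j}E_{i,j}f\,E_{i,j}g$ inserts an extra subtracted term $\sum_{v,u}\k_v\k_uE_vf\,E_ug\,\la v,\xi\ra\la u,\xi\ra$ that does not follow from the definition of $E_{i,j}$, and that insertion is exactly what flips the sign to match the stated identity; the discrepancy you would hit on completing your computation is therefore real and not an artifact of your decomposition.
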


\begin{proof}
For simplicity, we denote the right hand side of \eqref{nabla_h0=CDij} by $[\CD f, \CD g]$. 
By the decomposition of $\CD_{i,j}$ in \eqref{decompCDij}, we can write
$$
   [\CD f, \CD g] = [Df, D g] + [Df , Eg]+[Ef, Dg] + [Ef,Eg],
$$
where the brackets in the right hand side is defined in analogy to $[\CD f, \CD g]$. By
\eqref{nabla=Dij}, we have $[Df, Dg] = \nabla_0 f(x) \cdot \nabla_0 g(x)$. By 
the definition of $E_{i,j} f$ and the fact that $x_i v_j - x_j v_i = D_{i,j} \la x, v\ra$, we have
\begin{align*}
  [D f, Eg] =  \sum_{1 \le i < j \le d} D_{i,j} f(\xi) E_{i,j}g(\xi) 
                =   \sum_{v \in R_+} \k_v E_v g(\xi) \sum_{1\le i < j \le d} D_{i,j} f(\xi) D_{i,j} \la \xi, v\ra. 
\end{align*}
Since, by \eqref{eq:nabla}, we have for $x = r\xi$,
\begin{equation} \label{nabla0x.v}
  \nabla_0 \la v, \xi \ra =  \frac{1}{r}  \nabla_0 \la v, x \ra = \nabla \la v ,x \ra - \xi \frac{d}{dr} \la v, r \xi\ra
        = v - \la v, \xi \ra \xi,
\end{equation}
it follows from \eqref{nabla=Dij} and $\xi \cdot \nabla_0 f(\xi) =0$ that 
$$
   \sum_{1\le i < j \le d} D_{i,j} f(\xi) D_{i,j} \la \xi, v\ra = \nabla_0 f(\xi) \cdot \nabla_0 \la v, \xi \ra
       = \nabla_0 f(\xi) \cdot (v - \la v, \xi \ra \xi )
       = \nabla_0 f(\xi) \cdot v.
$$
Consequently,  we conclude that 
$$
   [D f, Eg] =   \sum_{v \in R_+} \k_v  E_v g(\xi) \nabla_0 f(\xi) \cdot v.  
$$ 
Reversing the role of $f$ and $g$, we obtain an analogue expression for $[Ef, Dg]$. Furthermore,
the above consideration also shows that 
\begin{align*}
   \sum_{1 \le i < j \le d} (\xi_i v_j - \xi_j v_i) (\xi_i u_j - \xi_j u_i) 
      & =  \sum_{1 \le i < j \le d} D_{i,j} \la v,\xi\ra D_{i,j} \la u,\xi\ra \\
      & = \nabla_0  \la v,\xi\ra \cdot  \nabla_0  \la u,\xi\ra = \la v, u \ra - \la v,\xi \ra \la u,\xi\ra, 
\end{align*}
so that, by the definition of $E_{i,j}$ and \eqref{xi-na_h0}, 
\begin{align*}
 [Ef, Eg]   = & \sum_{v \in R_+} \sum_{u \in R_+} \k_v \k_u E_v f(\xi) E_u g(\xi) 
     \sum_{1 \le i < j \le d} (\xi_i v_j - \xi_j v_i) (\xi_i u_j - \xi_j u_i) \\
       &  - \sum_{v \in R_+} \sum_{u \in R_+} \k_v \k_u E_v f(\xi) E_u g(\xi)  \la v,\xi \ra \la u,\xi\ra \\
   =  &  \sum_{v \in R_+} \sum_{u \in R_+} \k_v \k_u E_v f(\xi) E_u g(\xi)  \la v, u\ra
       - (\xi \cdot \nabla_{h,0} f(\xi) ) (\xi \cdot \nabla_{h,0} g(\xi) ).
\end{align*}

Summing up these expressions, we deduce then
\begin{align*}
  [\CD, \CD]  = & - (\xi \cdot \nabla_{h,0} f(\xi) ) (\xi \cdot \nabla_{h,0} g(\xi) ) + \nabla_0 f(\xi) \cdot \nabla_0 g(\xi)
     +  \sum_{v \in R_+} \k_v  E_v g(\xi) \nabla_0 f(\xi) \cdot v \\
                       & +  \sum_{v \in R_+} \k_v  E_v f(\xi) \nabla_0 g(\xi) \cdot v
                        + \sum_{v \in R_+} \sum_{u \in R_+} \k_v \k_u E_v f(\xi) E_u g(\xi)  \la v, u\ra.
\end{align*}
By the decomposition \eqref{eq:nabla_h0}, the last four terms in the right hand side of the above expression 
is precisely $\nabla_{h,0} f(\xi) \cdot \nabla_{h,0} g(\xi)$. This completes the proof. 
\end{proof}

The analog of \eqref{Delta=Dij2}, however, takes a must more complicated form.
\begin{prop}\label{prop:Delta=Dij-h}
For $\xi \in \sph$, 
\begin{align}\label{Delta=Dij-h}
  \Delta_{h,0} = & \sum_{1 \le i < j \le d} \CD_{i,j}^2 - (\xi \cdot \nabla_{h,0})^2+ 2 \l_\k \xi \cdot \nabla_{h,0} \\
           &- 2 \sum_{v \in R_+} \k_v^2 (I- \s_v)  + \sum_{v \in  R_+} \k_v^2 (I-\s_v)^2. \notag
\end{align}
\end{prop}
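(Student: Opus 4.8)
The plan is to derive \eqref{Delta=Dij-h} by combining the two identities already proved in this section: the relation \eqref{Delta-na-h}, namely $\Delta_{h,0} = \nabla_{h,0}\cdot\nabla_{h,0} - \xi\cdot\nabla_{h,0}$, and the identity \eqref{nabla_h0=CDij2}, which reads $|\nabla_{h,0}f|^2 + |\xi\cdot\nabla_{h,0}f|^2 = \sum_{i<j}|\CD_{i,j}f|^2$. The subtlety is that the right-hand side of \eqref{nabla_h0=CDij2} is a \emph{quadratic form} applied once, whereas in \eqref{Delta=Dij-h} the operator $\sum_{i<j}\CD_{i,j}^2$ is a genuine second-order differential-difference operator; the difference between $\sum_{i<j}\CD_{i,j}^2 f$ and the ``first-order-squared'' expression $\sum_{i<j}(\CD_{i,j}f)(\CD_{i,j}f)$ evaluated pointwise is exactly what produces the correction terms involving $\k_v^2$. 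So the heart of the proof is to compute $\nabla_{h,0}\cdot\nabla_{h,0}$ as an operator and compare it with $\sum_{i<j}\CD_{i,j}^2$ as an operator.

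First I would write $\nabla_{h,0}\cdot\nabla_{h,0} = \sum_j (\nabla_{h,0})_j (\nabla_{h,0})_j$ and substitute the expression \eqref{nabla_h0j} for $(\nabla_{h,0})_j$, i.e. $(\nabla_{h,0})_j = \sum_{i\ne j}\xi_i\CD_{i,j} + \xi_j(\xi\cdot\nabla_{h,0})$. Expanding the square gives three kinds of terms: a ``main'' term $\sum_j\sum_{i\ne j}\sum_{k\ne j}\xi_i\CD_{i,j}(\xi_k\CD_{k,j}\,\cdot\,)$, cross terms between $\sum_{i\ne j}\xi_i\CD_{i,j}$ and $\xi_j(\xi\cdot\nabla_{h,0})$, and the term $\sum_j \xi_j(\xi\cdot\nabla_{h,0})\xi_j(\xi\cdot\nabla_{h,0})$ which, after accounting for how $\xi\cdot\nabla_{h,0}$ moves past the factor $\xi_j$ and using $\sum_j\xi_j^2=1$ on $\sph$, should contribute $(\xi\cdot\nabla_{h,0})^2$ plus lower-order pieces. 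The commutator identity \eqref{eq:CDxjf} for $\CD_i(x_jf)$, and its consequence used in the proof of Lemma \ref{lemma2} for $\CD_{i,j}(\xi_i g)$ and $\CD_{i,j}(\xi_j g)$, are the tools needed to push the operators $\CD_{i,j}$ past the coordinate factors $\xi_i,\xi_k$; each such commutation spawns a $\delta$-type term and a term $2\sum_{v}\k_v \frac{v_\bullet v_\bullet}{\|v\|^2}\s_v$, and it is the bilinear (in $\k_v$) reflection terms arising here that must be shepherded into the form $-2\sum_v\k_v^2(I-\s_v) + \sum_v\k_v^2(I-\s_v)^2$.

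The cleanest route is probably not to recompute everything from scratch but to use \eqref{nabla_h0=CDij2} as a bridge: \eqref{nabla_h0=CDij2} already tells us that $\sum_{i<j}(\CD_{i,j}f)(\CD_{i,j}f) = |\nabla_{h,0}f|^2 + |\xi\cdot\nabla_{h,0}f|^2$ pointwise, so I would instead compute the purely operator-theoretic ``defect'' $\sum_{i<j}\CD_{i,j}^2 - \big(\text{polarized version of the RHS of }\eqref{nabla_h0=CDij2}\big)$. Concretely, for a product $\CD_{i,j}^2 = \CD_{i,j}\circ\CD_{i,j}$, write $\CD_{i,j}=x_i\CD_j - x_j\CD_i$ and use \eqref{eq:CDxjf} to commute the outer $x_i\CD_j$, $x_j\CD_i$ past the inner $\CD_{i,j}$; the ``leading'' part reassembles into $\nabla_{h,0}\cdot\nabla_{h,0}$-type and $(\xi\cdot\nabla_{h,0})$-type contributions (matching the $-(\xi\cdot\nabla_{h,0})^2 + 2\l_\k\,\xi\cdot\nabla_{h,0}$ and the $\Delta_{h,0}+\xi\cdot\nabla_{h,0}$ from \eqref{Delta-na-h}), while the commutator corrections collect into the reflection sum. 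To pin down the coefficients of the reflection terms one uses the elementary facts already invoked above: $\sum_{i<j}(\xi_iv_j-\xi_jv_i)(\xi_iu_j-\xi_ju_i) = \la v,u\ra - \la v,\xi\ra\la u,\xi\ra$, together with $E_v = (I-\s_v)/\la x,v\ra$, $\la x\s_v,v\ra = -\la x,v\ra$, and $\s_v^2=I$, which in particular gives the algebraic identity $(I-\s_v)^2 = 2(I-\s_v)$ on $\s_v$-invariant combinations but \emph{not} on general functions — and it is precisely the failure of $(I-\s_v)^2$ to equal $2(I-\s_v)$ here (because the $\s_v$'s inside the double sum act on different arguments, or get shifted by the coordinate prefactors) that leaves the residual $\sum_v\k_v^2(I-\s_v)^2$ term uncancelled.

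The main obstacle, as this makes clear, will be the bookkeeping of the double reflection sum $\sum_{v}\sum_{u}\k_v\k_u(\cdots)$: one must carefully separate the diagonal $u=v$ contribution — which, using $\s_v^2 = I$ and $\|v\|=1$ normalization of roots, produces the terms $-2\sum_v\k_v^2(I-\s_v)+\sum_v\k_v^2(I-\s_v)^2$ — from the off-diagonal $u\ne v$ contributions, which must be shown to cancel against analogous off-diagonal pieces coming from $\nabla_{h,0}\cdot\nabla_{h,0}$ (via \eqref{Delta-na-h}) so that no cross-root terms survive in the final formula. Getting the sign and the factor $2$ right on the $2\l_\k\,\xi\cdot\nabla_{h,0}$ term requires using \eqref{nabla_h0x}, $\nabla_{h,0}\cdot\xi = 2\l_\k+1 - \xi\cdot\nabla_{h,0}$, at the point where $\xi\cdot\nabla_{h,0}$ is commuted past the $\xi_j$ factors; I expect this, plus the diagonal-versus-off-diagonal split, to be where essentially all the work lies, the rest being the same type of manipulations already carried out in the preceding propositions.
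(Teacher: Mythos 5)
Your overall strategy is the right one and is essentially the paper's: treat $\sum_{i<j}\CD_{i,j}^2$ as a genuine second-order operator, track the commutator/reflection corrections that distinguish it from the pointwise quadratic form in \eqref{nabla_h0=CDij2}, and compare the result with $\nabla_{h,0}\cdot\nabla_{h,0}$ through \eqref{Delta-na-h}. (The paper organizes the expansion via $\CD_{i,j}=D_{i,j}+E_{i,j}$ and computes the four blocks $D^2$, $DE$, $ED$, $E^2$ separately, rather than via \eqref{nabla_h0j} or \eqref{eq:CDxjf}, but that is a difference of bookkeeping, not of idea.) However, the proposal stops where the proof actually begins: each block requires a nontrivial computation, and the assertion that ``the commutator corrections collect into the reflection sum'' is precisely the content of the proposition, not something that can be deferred.

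More importantly, the one concrete mechanism you offer for producing the terms $-2\sum_v\k_v^2(I-\s_v)+\sum_v\k_v^2(I-\s_v)^2$ is wrong. You attribute them to ``the failure of $(I-\s_v)^2$ to equal $2(I-\s_v)$ on general functions.'' But $\s_v$ is an involution, so $\s_v^2=I$ as an operator on all functions, and hence $(I-\s_v)^2=2(I-\s_v)$ identically; there is no such failure (indeed this identity shows the last two terms of \eqref{Delta=Dij-h} cancel one another, so whatever produces them in the derivation cannot be the mechanism you describe). In the actual computation these terms arise from the diagonal $u=v$ part of the double sum $\sum_{v,u}\k_v\k_u\la\xi\s_v,u\ra\s_vE_u$ occurring in $\sum_{i<j}E_{i,j}^2$, via $\la\xi\s_u,u\ra\s_uE_u=-(I-\s_u)$ and the corresponding expression for $u\ne v$. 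Your description of the off-diagonal cancellation is also only partly right: the full double sum $\sum_{v,u}\k_v\k_u\la v,u\ra E_vE_u$ (diagonal included) cancels against the corresponding term of $\nabla_{h,0}\cdot\nabla_{h,0}$, while $\sum_{v,u}\k_v\k_u(I-\s_u)(I-\s_v)$ reassembles into $(\xi\cdot\nabla_{h,0})^2$ by \eqref{xi-na_h0} and $\sum_{v\ne u}\k_v\k_u(I-\s_u)$ into $\g_\k\,\xi\cdot\nabla_{h,0}-\sum_u\k_u^2(I-\s_u)$; no normalization $\|v\|=1$ is needed anywhere. Until this double sum is actually carried out, the proof is not there.
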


\begin{proof}
We start with the main term in the right hand side. Since $\CD_{i,j} = D_{i,j} + E_{i,j}$, we can write
\begin{equation}\label{pf:2.10-1}
  \sum_{i< j} \CD_{i,j}^2 = \sum_{i<j}  D_{i,j}^2 + \sum_{i<j} D_{i,j} E_{i,j} +\sum_{i<j}  E_{i,j} D_{i,j} + \sum_{i<j} E_{i,j}^2. 
\end{equation} 
The first term in the right hand side, by \eqref{Delta=Dij2}, is equal to $\Delta_0 = \nabla_0 \cdot \nabla_0$. For
the second term, we use the fact that $\xi_i v_j - \xi_j v_i = D_{i,j} \la \xi,v\ra$ and the fact that $D_{i,j}$ is a derivation to
write it as 
\begin{align*}
 \sum_{i<j} D_{i,j} E_{i,j} = & \sum_{v \in R_+} \k_v \sum_{1 \le i< j \le d} \left (D_{i,j}^2 \la \xi, v\ra) E_v
     + D_{i,j}  \la \xi ,v \ra D_{i,j} E_v \right) \\
   = &  \sum_{v \in R_+} \k_v  (\Delta_0 \la \xi, v\ra) E_v +  \sum_{v \in R_+} \k_v  (\nabla_0 \la \xi, v\ra) \cdot 
      \nabla_0 E_v      
\end{align*}
by \eqref{Delta=Dij2} and \eqref{nabla=Dij}. Now, by \eqref{eq:nabla}, it is easy to see that 
$\Delta_0 \la \xi, v\ra = - (d-1) \la \xi, v\ra$, which implies, together with \eqref{nabla0x.v}, that 
\begin{align*} 
 \sum_{i<j} D_{i,j} E_{i,j} =&\, - (d-1)\sum_{v \in R_+} \k_v   \la \xi, v\ra  E_v + 
     \sum_{v \in R_+} \k_v \left( v \cdot \nabla_0  E_v - \la v, \xi \ra \xi \cdot \nabla_0 E_v \right) \notag \\
    = &\, - (d-1) \xi \cdot \nabla_{h,0} +   \sum_{v \in R_+} \k_v v \cdot \nabla_0  E_v     
\end{align*}
upon using \eqref{xi-na_h0} and the fact that $\xi \cdot \nabla_0 = 0$. The third term in the right hand
side of \eqref{pf:2.10-1} is 
$$
  \sum_{i<j}  E_{i,j} D_{i,j}  =  \sum_{v \in R_+} \k_v \sum_{i< j} (\xi_i v_j - \xi_j v_i) E_v D_{i,j}. 
$$
Using the definition of $E_v$ and $(\xi \s_v)_j = \xi_j - 2 \la \xi, v\ra v_j / \|v\|^2$, we see that 
$$
   E_v (\xi_i v_j f(\xi)) = \frac{\xi v_j f(\xi) - v_i (\xi\s_v)_j f(\xi \s_v)}{\la \xi,v\ra}
      = \xi v_j E_v f(\xi) + \frac{2 \la \xi,v\ra}{\|v\|^2} v_i v_j,
$$
which implies that $E_v (\xi_i v_j - \xi_j v_i)=(\xi_i v_j - \xi_j v_i) E_v$. Hence, using \eqref{nabla0x.v}
and \eqref{xi-na_h0}, we conclude that
\begin{align*}
  \sum_{i<j}  E_{i,j} D_{i,j}  & = \sum_{v \in R_+} \k_v E_v  \sum_{i< j} (\xi_i v_j - \xi_j v_i) D_{i,j} 
  = \sum_{v \in R_+} \k_v E_v  \sum_{i< j } D_{i,j} \la \xi ,v\ra D_{i,j} \notag \\ 
 & =   \sum_{v \in R_+} \k_v E_v \nabla_0 \la \xi,v\ra \cdot \nabla_0 
   =  \sum_{v \in R_+} \k_v E_v v \cdot \nabla_0.  
\end{align*}
Finally, the fourth term in the right hand side of \eqref{pf:2.10-1} can be written as 
\begin{align*} 
  \sum_{i<j}  E_{i,j}^2  & = \sum_{v \in R_+} \sum_{u \in R_+} \k_v \k_u 
        \sum_{i<j} (\xi_i v_j - \xi_j v_i) E_v (\xi_i u_j - \xi_j u_i) E_u  \notag \\ 
 & = \sum_{v \in R_+} \sum_{u \in R_+} \k_v \k_u 
        \sum_{i<j} D_{i,j} \la \xi, v\ra  D_{i,j} \la \xi,u \ra E_v E_u  \notag  \\
 &  \,\,   \,\,  +  \sum_{v \in R_+} \sum_{u \in R_+} \k_v \k_u 
        \sum_{i<j} D_{i,j} \la \xi, v\ra  E_v (D_{i,j} \la \xi,u \ra) \sigma_v E_u=: \CE_1 +\CE_2.
\end{align*}
By \eqref{nabla=Dij} and \eqref{nabla0x.v}, it is easy to see that  
$$ 
     \sum_{i<j} D_{i,j} \la \xi, v\ra D_{i,j} \la \xi,u \ra = (\nabla_0 \la \xi, v\ra) \cdot (\nabla_0 \la \xi,u \ra) \
      =  \la v,u\ra - \la v,\xi\ra \la u,\xi\ra, 
$$
which implies, by \eqref{xi-na_h0} that 
\begin{align*}
\CE_1  & = \sum_{v \in R_+} \sum_{u \in R_+} \k_v \k_u (\la v,u\ra - \la v,\xi\ra \la u,\xi\ra) E_v E_u \\
  & = \sum_{v \in R_+} \sum_{u \in R_+} \k_v \k_u \la v,u\ra  E_v E_u -  
   \sum_{v \in R_+} \sum_{u \in R_+} \k_v \k_u \la \xi ,u\ra (I-\s_v) E_u. \notag
\end{align*} 
Directly from the definition, $E_v (D_{i,j} \la \xi,u \ra) = 2(v_i u_j -v_j u_i)/\|v\|^2$, from which it is 
easy to verify that 
$$
  \sum_{i<j} D_{i,j} \la \xi, v\ra  E_v (D_{i,j} \la \xi,u \ra) = 
     \frac{2  \la \xi, v\ra} {\|v\|^2}  \la u,v\ra - 2 \la \xi,u\ra
      = - \la  \xi \s_v, u\ra -  \la \xi, u\ra.
$$
Hence, it follows that 
\begin{align*}
\CE_2 = &  -  \sum_{v \in R_+} \sum_{u \in R_+} \k_v \k_u ( \la  \xi \s_v, u\ra +  \la \xi, u\ra)\s_v E_u.
\end{align*}
Consequently, adding the two expressions for $\CE_1$ and $\CE_2$, we conclude that
\begin{align*}
\sum_{i<j}E_{i,j}^2    = & \sum_{v \in R_+} \sum_{u \in R_+} \k_v \k_u \la v,u\ra  E_v E_u -  
   \sum_{v \in R_+} \sum_{u \in R_+} \k_v \k_u \la \xi ,u\ra E_u \\
     & -     \sum_{v \in R_+} \sum_{u \in R_+} \k_v \k_u   \la  \xi \s_v, u\ra \s_v E_u
\end{align*} 
By the definition of $E_\nu$, it is easy to see that $ \la  \xi \s_u, u\ra\sigma_u E_u = -  (I-\s_u)$ and
$$
 \la  \xi \s_v, u\ra\sigma_v E_u = \s_v - \s_v \s_u = (I - \s_v) - (I-\s_v) (I-\s_u)
$$
if $u\ne v$. It follows that 
\begin{align*}
 & \sum_{v \in R_+} \sum_{u \in R_+} \k_v \k_u   \la  \xi \s_v, u\ra \s_v E_u
   =   - \sum_{v \in R_+} \k_v^2 (I- \s_v) +\sum_{v \ne u} \k_v \k_u  (I-\s_u) \\ 
     & \qquad\qquad\qquad
      - \sum_{v \in R_+} \sum_{u \in R_+} \k_v \k_u  (I-\s_u)(I-\s_v) + \sum_{v\in R_+} \k_v (I-\s_v)^2 \\
  & \qquad\qquad =   \g_\k \xi \cdot \nabla_{h,0} - (\xi \cdot \nabla_{h,0})^2 - 2 \sum_{v \in R_+} \k_v^2 (I- \s_v) 
   +\sum_{v \in  R_+} \k_v^2 (I-\s_v)^2.
\end{align*}
Consequently, we conclude that 
\begin{align*}
 \sum_{i<j} E_{i,j}^2 = & \sum_{v \in R_+} \sum_{u \in R_+} \k_v \k_u \la v,u\ra  E_v E_u 
   - 2\g_\k \xi \cdot \nabla_{h,0} +  ( \xi \cdot \nabla_{h,0})^2 \\
     &+ 2 \sum_{v \in R_+} \k_v^2 (I- \s_v) 
   -\sum_{v \in  R_+} \k_v^2 (I-\s_v)^2.
\end{align*}

Putting these expressions that we derived for the four terms in the right hand side of \eqref{pf:2.10-1}, we conclude 
that 
\begin{align*}
 \sum_{i< j} \CD_{i,j}^2 = &\, \nabla_0\cdot \nabla_0 +     \sum_{v \in R_+} \k_v v \cdot \nabla_0  E_v     
 + \sum_{v \in R_+} \k_v E_v v \cdot \nabla_0 \\
 & +  \sum_{v \in R_+} \sum_{u \in R_+} \k_v \k_u \la v,u\ra  E_v E_u 
    - (2 \l_\k +1) \xi \cdot \nabla_{h,0} + (\xi \cdot \nabla_{h,0})^2 \\
        &+ 2 \sum_{v \in R_+} \k_v^2 (I- \s_v) 
   -\sum_{v \in  R_+} \k_v^2 (I-\s_v)^2.
\end{align*}
On the other hand, by the expression of $\nabla_{h,0}$ in \eqref{eq:nabla_h0}, it follows from 
\eqref{Delta-na-h} that 
\begin{align*} 
  \Delta_{h,0} +   \xi \cdot \nabla_{h,0}
         = &\, \nabla_0 \cdot \nabla_0 + \sum_{v \in R_+} \k_v v \cdot \nabla_0 E_v \\
   & +  \sum_{v \in R_+} \k_v E_v v \cdot \nabla_0 + \sum_{v \in R_+} \sum_{u \in R_+} \k_v \k_u \la v, u \ra E_v E_u. \notag
\end{align*}
Comparing the last two displayed equations proves \eqref{Delta=Dij-h}.  
\end{proof}

It is worth mentioning that $\xi \cdot \nabla_{h,0}$ appears in the previous two propositions. For 
ordinary derivatives, $\xi \cdot \nabla_0 =0$. 

As a final result in this section, we state a relation between the $L^2$ norm of $\nabla_{h,0} f$ and 
$(-\Delta_{h,0})^{\f12} f$, which will be used in the next section. For the classical differential operators,
the $L^2$ norm of $\nabla_0$ and $(-\Delta_{0})^{\f12}$ are equal. The equality, however, holds only
for invariant functions in the weighted setting.

\begin{cor} \label{cor:2.11}
If $f$ is invariant under the associated reflection group, then 
\begin{align*}
  \left \| (-\Delta_{h,0})^{\f12} f \right \|_{L^2(\sph, h_\k^2)}^2 = \, & \| \nabla_{h,0} f \|_{L^2(\sph, h_\k^2)}^2.
\end{align*}
\end{cor}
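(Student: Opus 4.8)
The plan is to combine the integration-by-parts formula \eqref{nabla_h0-adjoint} with the identity \eqref{Delta-na-h}, exploiting the fact that when $f$ is invariant under the reflection group every difference term $I - \s_v$ annihilates $f$ (and $\s_v g$ can be replaced by $g$ inside integrals against the invariant measure $h_\k^2 d\s$). First I would write
\[
  \left\| (-\Delta_{h,0})^{1/2} f \right\|_{L^2(\sph, h_\k^2)}^2
    = \frac{1}{\o_d^\k} \int_{\sph} \left( -\Delta_{h,0} f(\xi) \right) f(\xi)\, h_\k^2(\xi)\, d\s(\xi),
\]
which is justified by the spectral expansion \eqref{eq:Delta-power} and the orthogonality of the $h$-harmonics; for invariant $f$ the sum in \eqref{eq:Delta-power} is well defined and the pairing is legitimate. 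Then I would substitute $-\Delta_{h,0} = -\nabla_{h,0}\cdot\nabla_{h,0} + \xi\cdot\nabla_{h,0}$ from \eqref{Delta-na-h}. Crucially, by \eqref{xi-na_h0}, $\xi \cdot \nabla_{h,0} f(\xi) = \sum_{v\in R_+}\k_v[f(\xi) - f(\xi\s_v)] = 0$ because $f$ is invariant, so that term drops out entirely and we are left with
\[
  \left\| (-\Delta_{h,0})^{1/2} f \right\|_{L^2(\sph, h_\k^2)}^2
    = - \frac{1}{\o_d^\k} \int_{\sph} \left( \nabla_{h,0}\cdot\nabla_{h,0} f(\xi)\right) f(\xi)\, h_\k^2(\xi)\, d\s(\xi).
\]

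Next I would apply the adjoint formula \eqref{nabla_h0-adjoint} with the roles arranged so that the outer $\nabla_{h,0}$ moves onto $f$: taking $g = f$ and pairing component-wise, \eqref{nabla_h0-adjoint} gives
\[
  - \int_{\sph} \left(\nabla_{h,0}\cdot\nabla_{h,0} f\right) f\, h_\k^2\, d\s
    = \int_{\sph} \nabla_{h,0} f \cdot \nabla_{h,0} f\, h_\k^2\, d\s
      - (2\l_\k+1) \int_{\sph} (\xi\cdot\nabla_{h,0} f)\, f\, h_\k^2\, d\s .
\]
Again the last integral vanishes by \eqref{xi-na_h0} and invariance of $f$, leaving exactly $\| \nabla_{h,0} f \|_{L^2(\sph, h_\k^2)}^2$, which is the claim. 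For the density/regularity bookkeeping I would first prove the identity for $f$ a (finite) invariant $h$-harmonic polynomial, where all operations are manifestly legitimate and $\s_v f = f$ holds pointwise, and then pass to general invariant $f \in W_2^2(\sph, h_\k^2)$ by the $L^2$-convergence of the $h$-harmonic expansion together with continuity of both sides in that norm.

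The step I expect to be the main obstacle is the clean justification of the integration by parts when $f$ is merely in the natural Sobolev-type space rather than a polynomial: \eqref{nabla_h0-adjoint} is stated for $f,g \in C^1(\sph)$, so I would either restrict to invariant $h$-harmonic polynomials and take limits, or invoke the spectral definition \eqref{eq:Delta-power} directly and verify that $\| \nabla_{h,0} \proj_n^\k f \|^2 = n(n+2\l_\k) \| \proj_n^\k f \|^2$ on each eigenspace (which is exactly the polynomial case of the identity), then sum over $n$ using orthogonality. The second route is cleaner: it reduces everything to the single eigenspace computation, where $f = Y_n$ is a polynomial, $\s_v Y_n = Y_n$, and the two vanishing observations from \eqref{xi-na_h0} are immediate. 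I would also remark, as the excerpt's preceding paragraph already hints, that invariance is genuinely needed: without it the term $(2\l_\k+1)\int (\xi\cdot\nabla_{h,0}f) f\, h_\k^2\, d\s$ and the difference term in \eqref{Delta-na-h} do not cancel, and the two norms differ.
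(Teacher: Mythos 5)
Your argument is correct, but it takes a different route from the paper's. The paper proves the corollary by specializing Proposition \ref{prop:Delta=Dij-h}: for invariant $f$ the terms $(\xi\cdot\nabla_{h,0})^2 f$, $2\l_\k\,\xi\cdot\nabla_{h,0}f$ and the $(I-\s_v)$ terms in \eqref{Delta=Dij-h} all vanish, so $-\Delta_{h,0}f=-\sum_{i<j}\CD_{i,j}^2 f$; it then integrates by parts with \eqref{CDij=CDij} to get $\sum_{i<j}\|\CD_{i,j}f\|^2$ and converts this to $\|\nabla_{h,0}f\|^2$ via \eqref{nabla_h0=CDij2} (again using $\xi\cdot\nabla_{h,0}f=0$). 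You instead use the lighter decomposition \eqref{Delta-na-h}, $\Delta_{h,0}=\nabla_{h,0}\cdot\nabla_{h,0}-\xi\cdot\nabla_{h,0}$, drop the second term by \eqref{xi-na_h0}, and integrate by parts once with the adjoint formula \eqref{nabla_h0-adjoint}; the extra boundary-type term $(2\l_\k+1)\int(\xi\cdot\nabla_{h,0}f)f\,h_\k^2\,d\s$ vanishes for the same reason. Both proofs rest on the same two vanishing facts for invariant $f$, but yours bypasses the heavy identity \eqref{Delta=Dij-h} and the operators $\CD_{i,j}$ entirely, at the cost of applying \eqref{nabla_h0-adjoint} with $(\nabla_{h,0})_jf$ in the first slot, which requires one more degree of smoothness than the $C^1$ hypothesis under which that formula is stated; your suggested remedy (verify the identity on each eigenspace $\CH_n^d(h_\k^2)$, where everything is polynomial, and sum using orthogonality and \eqref{eq:Delta-power}) handles this cleanly and is in fact the tidiest way to justify the first equality $\|(-\Delta_{h,0})^{1/2}f\|^2=\la -\Delta_{h,0}f,f\ra$ in either approach.
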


\begin{proof}
If $f$ is invariant, then $\xi \cdot \nabla f(\xi) =0$ and $E_u f(\xi) =0$. For such functions, the
operator $-\Delta_{h,0}$ is evidently self-adjoint by \eqref{Delta=Dij-h} and the latter also implies
that 
\begin{align*}
    \left \| (-\Delta_{h,0})^{\f12} f \right \|_{L^2(\sph, h_\k^2)}^2 
     = & \frac{1}{\o_d^\k} \int_{\sph}  f(\xi) (-\Delta_{h,0}) f(\xi) h_\k^2(\xi) d\s \\
     = &  - \frac{1}{\o_d^\k} \int_{\sph}   f(\xi) \sum_{1 \le i < j \le d} \CD_{i,j}^2 f(\xi) h_\k^2(\xi) d\s,
\end{align*}
which is precisely $\|\nabla_{h,0}\|_{L^2(\sph, h_\k^2)}^2$, using integration by parts by \eqref{CDij=CDij}.
\end{proof}

\section{Uncertainty principles on weighted space on the unit sphere}
\setcounter{equation}{0}

In this section we study uncertainty principle on the weighted space. We denote by $W_2^1(\sph, h_\k^2)$ 
the weighted Sobolev space 
$$
   W_2^1(\sph, h_\k^2) : = \left \{ f \in L^2(\sph, h_\k^2):   \nabla_{h,0} f \in L^2(\sph, h_\k^2) \right \}.
$$

Since the measure $h_\k^2 d\s$ is no longer invariant under the entire orthogonal group, we can no 
longer expect an inequality that holds with minimal taken over all $e \in \sph$ as in \eqref{UCinequality}.
What we are able to do is to take minimal over all the coordinate plans as shall be seen below. 

For our first result on the uncertainty principle, we consider functions that are invariant under
a reflection group $G$. Recall that the weight function $h_\k$ associated with $G$ is defined 
in \eqref{hk-weight} and the constant $\l_\k$ is defined in \eqref{eq:lambda_k}.  

\begin{thm} \label{thm:uncert-weigth-sph}
Let $f \in W_2^1(\sph, h_\k^2)$ be invariant under the reflection group $G$ and assume 
$\int_{\sph} f(x) h_\k^2(x) d\s =0$ and $\|f\|_{L^2(\sph, h_\k^2)} =1$.  Then  
\begin{equation} \label{eq:uncert-h}
  \min_{1\le i \le d} \frac{1}{\o_d^\k} \int_{\sph} (1-x_i) |f(x)|^2 h_\k^2(x) d\s \, \|\nabla_{0} f\|_{L^2(\sph, h_\k^2)}^2 
        \ge C_{\k,d},
\end{equation}
where 
$$
   C_{\k,d} = 2 \l_\k \left(1 - \frac{ \sqrt{2 \l_\k}}{\sqrt{(\l_\k + \f12)^2 + 2 \l_\k}}\right).
 $$
\end{thm}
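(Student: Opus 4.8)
The plan is to prove the stronger, pointwise-in-$i$ statement: for every fixed $i\in\{1,\dots,d\}$,
\[
   A_i\,B\ \ge\ C_{\k,d},\qquad A_i:=\tfrac1{\o_d^\k}\int_{\sph}(1-x_i)|f|^2h_\k^2\,d\s,\quad B:=\|\nabla_0 f\|_{L^2(\sph,h_\k^2)}^2 ,
\]
from which \eqref{eq:uncert-h} follows by taking the minimum over $i$. Throughout write $\la u,v\ra:=\tfrac1{\o_d^\k}\int_{\sph} u\bar v\,h_\k^2\,d\s$, so that $\nu:=\tfrac1{\o_d^\k}|f|^2h_\k^2\,d\s$ is a probability measure and $A_i=1-\la x_i f,f\ra$. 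First I would record a lower bound for $B$: since $f$ is $G$-invariant, \eqref{eq:nabla_h0} gives $\nabla_0 f=\nabla_{h,0}f$, so by Corollary~\ref{cor:2.11} and \eqref{eq:Delta-power}, $B=\|(-\Delta_{h,0})^{1/2}f\|_{L^2(\sph,h_\k^2)}^2=\sum_{n\ge1}n(n+2\l_\k)\|\proj_n^\k f\|^2$, the sum starting at $n=1$ because $\int_{\sph} f h_\k^2\,d\s=0$; as $n(n+2\l_\k)\ge 2\l_\k$ for all $n\ge1$ and $\|f\|=1$, this yields $B\ge 2\l_\k$.

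The heart of the argument is a first-moment identity obtained by integration by parts. Using the weighted divergence identity $\int_{\sph}(\nabla_0 u)\cdot(\nabla_0 v)\,h_\k^2\,d\s=-\int_{\sph} u\,\bigl(h_\k^{-2}\operatorname{div}_0(h_\k^2\nabla_0 v)\bigr)h_\k^2\,d\s$ with $u=|f|^2$ and $v=x_i$, together with $\Delta_0 x_i=-(d-1)x_i$ and \eqref{nabla0x.v}, one computes
\[
  h_\k^{-2}\operatorname{div}_0(h_\k^2\nabla_0 x_i)=-(2\l_\k+1)\,x_i+2\sum_{v\in R_+}\k_v\frac{v_i}{\la x,v\ra}.
\]
I would then observe that each singular term vanishes: $\int_{\sph}\frac{v_i}{\la x,v\ra}|f|^2h_\k^2\,d\s=0$, because the substitution $x\mapsto x\s_v$ preserves $d\s$, sends $\la x,v\ra$ to $-\la x,v\ra$, and fixes the density $|f|^2h_\k^2$ since $f$ and $h_\k$ are $G$-invariant. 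Hence, recalling $\nabla_0|f|^2=2\operatorname{Re}(\bar f\,\nabla_0 f)$,
\[
  2\operatorname{Re}\,\tfrac1{\o_d^\k}\int_{\sph}\bar f\,(\nabla_0 f\cdot\nabla_0 x_i)\,h_\k^2\,d\s=(2\l_\k+1)\,\la x_i f,f\ra=(2\l_\k+1)(1-A_i).
\]
(As elsewhere in the paper, this is first established for $\k_v\ge1$, where $h_\k^2$ is $C^1$, and then extended to all $\k_v\ge0$ by analytic continuation.)

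Now I would apply Cauchy--Schwarz to the integral just bounded, using $|\nabla_0 x_i|^2=1-x_i^2$, to get $(2\l_\k+1)|1-A_i|\le 2B^{1/2}\bigl(\la(1-x_i^2)f,f\ra\bigr)^{1/2}$; and since $\nu$ is a probability measure, $\la x_i^2 f,f\ra\ge\la x_i f,f\ra^2$, so $\la(1-x_i^2)f,f\ra\le 1-(1-A_i)^2=A_i(2-A_i)$. Squaring gives $(2\l_\k+1)^2(1-A_i)^2\le 4B\,A_i(2-A_i)$, hence $A_iB\ge (2\l_\k+1)^2(1-A_i)^2/\bigl(4(2-A_i)\bigr)$ when $A_i\le1$ (if $A_i>1$ then $A_iB\ge B\ge 2\l_\k>C_{\k,d}$ and we are done). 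Combining with $A_iB\ge 2\l_\k A_i$ from the first paragraph, for $A_i\in(0,1]$
\[
  A_iB\ \ge\ \max\Bigl\{\,2\l_\k A_i,\ \frac{(2\l_\k+1)^2(1-A_i)^2}{4(2-A_i)}\,\Bigr\}.
\]
The first term increases from $0$ and the second decreases to $0$, so the minimum of the right side is attained where they agree; solving $8\l_\k A(2-A)=(2\l_\k+1)^2(1-A)^2$ gives $A^\ast=1-\sqrt{8\l_\k/(4\l_\k^2+12\l_\k+1)}$, and the common value $2\l_\k A^\ast$ equals $2\l_\k\bigl(1-\sqrt{8\l_\k/(4\l_\k^2+12\l_\k+1)}\bigr)$, which, since $(\l_\k+\tfrac12)^2+2\l_\k=\tfrac14(4\l_\k^2+12\l_\k+1)$, is exactly $C_{\k,d}$.

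The step I expect to be the main obstacle is the first-moment identity: both getting the constant $2\l_\k+1$ right and, especially, justifying the vanishing of the hyperplane-singular terms $\sum_{v}\k_v v_i/\la x,v\ra$ — this genuinely uses the full reflection invariance of the density $|f|^2h_\k^2$ (not merely invariance under coordinate permutations or rotations), and requires care both with the integrability near $\{\la x,v\ra=0\}$ (where one needs $\k_v>0$ for the terms that are present) and with the integration by parts when $h_\k$ is not $C^1$. Once that identity is available, the remainder is two applications of Cauchy--Schwarz and an elementary one-variable minimization.
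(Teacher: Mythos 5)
Your proof is correct and follows essentially the same route as the paper: the spectral lower bound $\|\nabla_0 f\|^2\ge 2\l_\k$, a first-moment identity with constant $2\l_\k+1$, Cauchy--Schwarz with the weight $1-x_i^2$, the second-moment inequality, and the same max/min optimization yielding $C_{\k,d}$. The only (minor) divergence is that you re-derive the first-moment identity directly by weighted integration by parts against $h_\k^2\,d\s$, killing the singular terms $\sum_v \k_v v_i/\la x,v\ra$ by the reflection symmetry $x\mapsto x\s_v$, whereas the paper obtains the same identity as an immediate consequence of its adjoint formula \eqref{nabla_h0-adjoint} for $\nabla_{h,0}$ established in Section~3 — your version is more self-contained for invariant $f$, but it is the same computation in substance.
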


\begin{proof}
For convenience, we set
$$
r:=\f 1{\o_d^\k} \int_{\sph} (1-x_i)|f(x)|^2 h_\k^2(x) d\s(x) \quad \hbox{and} \quad
     Lf: =  r \|\nabla_{0} f\|_{L^2(\sph, h_\k^2)}^2.
$$
Our goal is to show that $ L f$ is bounded below by a constant. Since $\|f\|_{L^2(\sph, h_\k^2)} =1$, it is 
evident that $r\in (0,2)$. By \eqref{nabla_h0-adjoint} with $g = f$, we have 
\begin{equation}\label{eq:proof3.1}
 (2\l_\k+1)  \int_{\sph} x |f(x)|^2 h_\k^2(x) d\s = 2  \int_{\sph} f(x)  \nabla_{h,0} f(x)h_\k^2(x)d\s.
\end{equation}
Since $f$ is invariant under the reflection group, $\nabla_{h,0} = \nabla_0$ and 
$x \cdot \nabla_{h,0} f(x) =0$ by \eqref{xi-na_h0}. Hence, for $1 \le i \le d$, 
\begin{align*}
\left |x_i (\nabla_{h,0})_i f(x) \right |^2 =  \sum_{j \ne i}^d \left | x_j (\nabla_{0})_j f(x) \right |^2 
 & \le \sum_{j \ne i} x_j^2  \sum_{j \ne i} \left| (\nabla_{0})_j f(x) \right |^2 \\ 
 & \le (1-x_i^2)  \sum_{j \ne i} \left| (\nabla_{0})_j f(x) \right |^2,
\end{align*}
which implies immediately that 
$$
  \left |(\nabla_{0})_i f(x) \right |^2 = \left |x_i (\nabla_{0})_i f(x) \right |^2 +
    (1-x_i^2)  \left | (\nabla_{0})_i f(x) \right |^2 \le (1-x_i^2)  \left | \nabla_{0}f(x)\right |^2.  
$$
Consequently, applying the Cauchy-Schwartz inequality on the $i$th component of \eqref{eq:proof3.1}
gives
\begin{align*}
  (\l_\k+\tfrac12)^2  |1 - r|^2  & = \frac{1}{\o_d^\k} \left| \int_{\sph} f(x)  (\nabla_{0})_i f(x)h_\k^2(x)d\s \right|^2  \\ 
    & \le \frac{1}{\o_d^\k} \int_{\sph} (1-x_i^2) |f(x)|^2 h_\k^2(x) d\s \|\nabla_{0} f \|_{L^2(\sph, h_\k^2)}^2.
\end{align*}
Since $\|f\|_{L^2(\sph, h_\k^2)}^2 =1$, the Cauchy-Schwartz inequality shows that
\begin{align*}
   \f1{\o_d^\k} \int_{\sph} |f(x)|^2 x_i^2 h_\k^2(x) d\s(x) \ge
         \Bl |\f1{\o_d^\k}\int_{\sph} |f(x)|^2 x_i h_\k^2(x) d\s(x)\Br|^2 = (1-r)^2,
\end{align*}
from which it follows that
\begin{align} \label{proof3.1-2}
      \f1{\o_d^\k}\int_{\sph} |f(x)|^2 (1-x_1^2)h_\k^2(x) d\s(x)\leq 1- (1-r)^2 = (2-r)r.
\end{align}
Consequently, we conclude that
\begin{align*}
   ( \l_\k + \tfrac 12)^2(1-r)^2 \le (2-r) r \|\nabla_0 f\|_{L^2(\sph, h_\k^2)}^2 = (2-r) Lf
\end{align*}
or, equivalently,
\begin{align}  \label{proof3.1-3}
       Lf \ge   ( \l_\k + \tfrac 12)^2 \frac{(1-r)^2}{2-r}.
\end{align}

On the other hand, by \eqref{eigen-eqn} and the assumption that $\int_{\sph} f(x) d\s(x)=0$,
\begin{align*}
 1&  =  \|f\|_{L^2(\sph, h_\k^2)}^2 =  \sum_{n=1}^\infty \|\proj_n f\|_{L^2(\sph, h_\k^2)}^2 \\
     & \le  \frac1 {2 \l_\k} \sum_{n=1}^\infty n(n + 2 \l_\k) \|\proj_n f\|_{L^2(\sph, h_\k^2)}^2  
     = \f{1}{2 \l_\k}\| (-\Delta_{h,0})^{\f12} f\|_{L^2(\sph, h_\k^2)}^2.
\end{align*}
Since $f$ is invariant under the reflection group,  it follows from \eqref{xi-na_h0} and Corollary \ref{cor:2.11}
that $\|(-\Delta_{h,0})^{\f12} f\|_{L^2(\sph, h_\k^2)}^2 = \|\nabla_0 f\|_{L^2(\sph, h_\k^2)}^2$, which implies that 
$Lf  = r  \|\nabla_{0} f\|_{L^2(\sph, h_\k^2)}^2 \ge 2 \l_\k  r$. Together with 
 \eqref{proof3.1-3}, we have shown that 
\begin{align*}
    L f   \ge \max \left\{ (\l_\k  +\tfrac12)^2 \frac {(1-r)^2}{2-r}, 2 \l_\k  r \right\}
       \ge  \min_{t\in (0,2)} \max \left\{ (\l_\k  +\tfrac12)^2 \frac {(1-t)^2}{2-t}, 2 \l_\k t \right\}.
\end{align*}
Since $(1-t)^2 /(2-t)$ is decreasing on $(0,1)$ and increasing on $(1,2)$, it is easy to see that the value of the right
hand side is attained when the two terms are equal, which proves \eqref{eq:uncert-h}. 
\end{proof}

If $f$ is not invariant under the reflection group, the above proof breaks down. Indeed, in
that case, we no longer have $|(\nabla_{h,0})_i f(x)|^2 \le (1-x_i^2) |\nabla_{h,0} f(x)|^2$
since such an inequality would imply, by \eqref{nabla_h0j}, that $x \cdot \nabla_{h,0}(x) = 0$ 
for $x_i = 0$, which does not hold in general.

The optimal constant $C_{\k,d}$ for our uncertainty inequalities are not known; in fact, this constant 
is unknown for the unweighted inequality \eqref{UCinequality}. Following another approach in \cite{DaiX} 
based on orthogonal expansion, it is possible to establish \eqref{eq:uncert-h} in the case of 
$h_\k(x) = \prod_{i=1}^d|x_i|^{\k_i}$ and $G=\ZZ_2^d$, with a better 
constant than the proof of Theorem  \ref{thm:uncert-weigth-sph} could offer; the constant, however, 
is still not optimal. 

\section{Uncertainty principle on the unit ball}
\setcounter{equation}{0}

There is a close relation between analysis on the unit sphere and on the unit ball (see \cite{DX}), which 
can be used to derive uncertainty inequalities on the unit ball from the results in the previous section. 

Let $h_\k(x)$ be the reflection invariant weight function defined in \eqref{hk-weight} for a given
reflection group $G$ and a multiplicity function $\k$. For $\mu \ge 0$, we consider the weight function
\begin{equation} \label{weight-ball}
   W_{\k,\mu} (x):= h_\k^2(x) (1-\|x\|^2)^{\mu - 1/2}, \qquad x \in \BB^d. 
\end{equation}
Evidently $W_{\k,\mu}$ is invariant under the reflection group $G$. When $\k = 0$ or $h_\k(x) =1$, it
becomes the classical rotation invariant weight function
$$
 W_{\mu} (x):= (1-\|x\|^2)^{\mu - 1/2}, \qquad x \in \BB^d. 
$$
Let $L^2(\BB^d, W_{\k,\mu})$ denote the space of measurable functions for which
$$
   \| f\|_{L^2(\BB^d,W_{\k,\mu})} =  \left( b_{\k,\mu} \int_{\BB^d} |f(x)|^2 W_{\k,\mu}(x) dx\right)^{1/2}
$$
are finite, where $b_{\k,\mu}$ denotes the normalization constant so that $\|1\|_{W_{\k,\mu},2} =1$.
For simplicity, let us define $W_2^1(\BB^d, W_{\k,\mu})$ to be the subspace of $L^2(\BB^d,W_{\k,\mu})$ 
for which all first order derivatives of its elements are in $L^2(\BB^d,W_{\k,\mu})$.

The weight function $W_{\k,\mu}$ is known to be closely related to the weight function
$$
      h_{\k,\mu}(x,x_{d+1}) = h_\k^2(x) |x_{d+1}|^{2\mu}, \quad (x,x_{d+1}) \in \SS^d,
$$
which is invariant under the reflection group $G \times \ZZ_2$ acting on $\SS^d$. Let $\SS_+^d$ denote
the north hemisphere defined by $\SS_+^d: = \{(x,x_{d+1}) \in \SS^d: x_{d+1} \ge 0\}$. Then the connection
between the two weight functions is established through 
$$
      (x, x_{d+1}) \in \SS_+^d \mapsto x \in \BB^d, 
$$
which is clearly a bijection. Under this map, it follows that  (cf. \cite[Sec. .8]{DX}) 
\begin{equation} \label{L2normSB}
   \|F \|_{L^2(\SS^d, h_{\k,\mu}^2)} =  \|f\|_{L^2(\BB^d,W_{\k,\mu})}, \qquad \hbox{if} \quad F(x,x_{d+1}) = f(x).
\end{equation}
Recall that $D_{i,j} = x_i \partial_j - x_j \partial_i$ for $i \ne j$. We introduce the notation $\nabla_D$ that
satisfies 
$$
   | \nabla_D f(x)|^2 : = \sum_{1 \le i < j \le d} |D_{i,j} f(x)|^2. 
$$
 
\begin{thm} \label{thm:uncet-ball1}
Let $f \in W_2^1(\BB^d, W_{\k,\mu})$ be invariant under the reflection group $G$ and assume 
$\int_{\BB^d} f(x) W_{\k,\mu}(x) dx =0$ and $\|f\|_{L^2(\BB^d,W_{\k,\mu})} =1$.  Then 
\begin{align} \label{eq:uncert-h-ball}
  \min_{1\le i \le d}  b_{\k,\mu} & \int_{\BB^d} (1-x_i) |f(x)|^2 W_{\k,\mu}(x) dx \, |||\nabla f|||^2 
             \ge C_{\k,\mu}, 
\end{align}
where 
\begin{align} \label{|||nablaf|||} 
|||\nabla f |||^2 :=  \| \nabla f \|_{L^2(\BB^d,W_{\k,\mu+1})}^2 +  \| \nabla_D f \|_{L^2(\BB^d,W_{\k,\mu})}^2
\end{align}
and, with $\l_{\k,\mu} : =  \g_\k + \mu + \f{d-1}{2}$, 
$$
C_{\k,\mu} = 2 \l_{\k,\mu} \left(1 - \frac{ \sqrt{2  \l_{\k,\mu}} }{\sqrt{( \l_{\k,\mu} + \f12)^2 + 2  \l_{\k,\mu}}}\right).
$$
\end{thm}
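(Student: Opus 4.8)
The plan is to transplant $f$ to the unit sphere $\SS^d\subset\RR^{d+1}$, apply Theorem~\ref{thm:uncert-weigth-sph} there, and transport the inequality back to $\BB^d$ by means of the correspondence \eqref{L2normSB}. Concretely, I would set $F(x,x_{d+1}):=f(x)$ for $(x,x_{d+1})\in\SS^d$. Since $F$ is independent of $x_{d+1}$ and $f$ is $G$-invariant, $F$ is invariant under the reflection group attached to the weight $h_{\k,\mu}^2$ on $\SS^d$ (this group is $G$ when $\mu=0$ and $G\times\ZZ_2$ when $\mu>0$); viewed inside $\RR^{d+1}$, its $\g$-parameter is $\g_\k+\mu$, so by \eqref{eq:lambda_k} the associated $\l$ equals $\g_\k+\mu+\frac{(d+1)-2}{2}=\l_{\k,\mu}$, exactly the parameter in $C_{\k,\mu}$. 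The hemisphere-to-ball change of variables $(x,x_{d+1})\in\SS_+^d\mapsto x\in\BB^d$, under which $d\s=(1-\|x\|^2)^{-1/2}dx$ and $x_{d+1}^2=1-\|x\|^2$, together with the evenness of $F$ and $h_{\k,\mu}^2$ in $x_{d+1}$, gives for any $G$ depending on $x$ alone
\begin{equation*}
  \frac{1}{\o_{d+1}^\k}\int_{\SS^d} G(x)\, h_{\k,\mu}^2(y)\, d\s(y) = b_{\k,\mu}\int_{\BB^d} G(x)\, W_{\k,\mu}(x)\, dx .
\end{equation*}
Taking $G=f$ and $G=|f|^2$, the hypotheses $\int_{\BB^d}fW_{\k,\mu}\,dx=0$ and $\|f\|_{L^2(\BB^d,W_{\k,\mu})}=1$ translate into $\int_{\SS^d}Fh_{\k,\mu}^2\,d\s=0$ and $\|F\|_{L^2(\SS^d,h_{\k,\mu}^2)}=1$.

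The key step is the identity for the spherical gradient of $F$. Applying \eqref{nabla=Dij} in $\RR^{d+1}$ with $f=g=F$ gives $|\nabla_0 F(y)|^2=\sum_{1\le i<j\le d+1}|D_{i,j}F(y)|^2$. For $i<j\le d$ one has $D_{i,j}F(y)=D_{i,j}f(x)$ since $F$ does not depend on $x_{d+1}$; for $j=d+1$ one has $D_{i,d+1}F(y)=x_i\partial_{d+1}F-x_{d+1}\partial_iF=-x_{d+1}\partial_if(x)$, whence $\sum_{i=1}^d|D_{i,d+1}F(y)|^2=x_{d+1}^2|\nabla f(x)|^2=(1-\|x\|^2)|\nabla f(x)|^2$ on $\SS^d$. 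Hence
\begin{equation*}
  |\nabla_0 F(y)|^2 = |\nabla_D f(x)|^2 + (1-\|x\|^2)|\nabla f(x)|^2, \qquad y=(x,x_{d+1})\in\SS_+^d .
\end{equation*}
Integrating this against $\frac{1}{\o_{d+1}^\k}h_{\k,\mu}^2\,d\s$, using the normalization identity above and the fact that $(1-\|x\|^2)W_{\k,\mu}(x)=W_{\k,\mu+1}(x)$, I obtain $\|\nabla_0 F\|_{L^2(\SS^d,h_{\k,\mu}^2)}^2=|||\nabla f|||^2$ with $|||\nabla f|||^2$ as in \eqref{|||nablaf|||}; the same normalization identity gives, for $1\le i\le d$, $\frac{1}{\o_{d+1}^\k}\int_{\SS^d}(1-y_i)|F|^2h_{\k,\mu}^2\,d\s=b_{\k,\mu}\int_{\BB^d}(1-x_i)|f|^2W_{\k,\mu}\,dx$.

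Finally, Theorem~\ref{thm:uncert-weigth-sph} applied to $F$ on $\SS^d$ with the weight $h_{\k,\mu}^2$ (whose parameter is $\l_{\k,\mu}$) yields
\begin{equation*}
  \min_{1\le i\le d+1}\frac{1}{\o_{d+1}^\k}\int_{\SS^d}(1-y_i)|F(y)|^2h_{\k,\mu}^2\,d\s\;\;\|\nabla_0 F\|_{L^2(\SS^d,h_{\k,\mu}^2)}^2\ge C_{\k,\mu} .
\end{equation*}
Since $\|\nabla_0 F\|^2$ is a common factor and the minimum over $1\le i\le d$ dominates the minimum over $1\le i\le d+1$, substituting the two translations of the previous paragraph gives \eqref{eq:uncert-h-ball}. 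The only substantial computation is the gradient identity above; the remaining points — bookkeeping of the constants $\o_{d+1}^\k$ and $b_{\k,\mu}$ and the verification that $F\in W_2^1(\SS^d,h_{\k,\mu}^2)$, which holds because $\nabla_D f$ and $(1-\|x\|^2)^{1/2}\nabla f$ are pointwise dominated by $|\nabla f|\in L^2(\BB^d,W_{\k,\mu})$ — are routine, so I do not anticipate a genuine obstacle.
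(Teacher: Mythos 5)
Your proposal is correct and follows essentially the same route as the paper: lift $f$ to $F(x,x_{d+1})=f(x)$ on $\SS^d$, prove the pointwise identity $|\nabla_0 F|^2=(1-\|x\|^2)|\nabla f|^2+|\nabla_D f|^2$, and invoke Theorem~\ref{thm:uncert-weigth-sph} with parameter $\l_{\k,\mu}$. The only (harmless) variation is that you obtain the gradient identity by splitting $\sum_{1\le i<j\le d+1}|D_{i,j}F|^2$ via \eqref{nabla=Dij}, whereas the paper computes $\nabla_0 F$ explicitly in the hemisphere coordinates and evaluates $|\nabla_D f|^2$ separately; your bookkeeping of the invariance of $F$, the constants, and the passage from $\min_{1\le i\le d+1}$ to $\min_{1\le i\le d}$ is all sound.
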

 
\begin{proof}
Let $F(x, x_{d+1}) = f(x)$ for $x \in \BB^d$. By \eqref{L2normSB}, the uncertainty principle 
\eqref{eq:uncert-h-ball} would follow from the corresponding \eqref{eq:uncert-h} if we could 
establish the identity $||| \nabla f ||| = \| \nabla_0 F\|_{L^2(\sph, h_\k^2)}$. 

For $(y, y_{d+1}) \in \RR^{d+1}$ such that $y_{d+1} \ge 0$, we define a mapping $(y, y_{d+1}) 
\mapsto (r,x)$, where $0< r \le 1$ and $x \in \BB^d$, by 
$$
    y_1 = r x_1,\ldots,  y_d = rx_d, \quad y_{d+1} = r x_{d+1}, \quad x_{d+1} =\sqrt{1-x_1^2-\cdots - x_d^2}.
$$
It is easy to see that this is a bijection and we have 
\begin{align*}
   \frac{\partial}{\partial y_i} & = x_i    \frac{\partial}{\partial r} + \frac{1}{r} \left(
     \frac{\partial}{\partial x_i}  - x_i (x \cdot \nabla^{(x)}) \right ), \quad 1 \le i \le d, \\
  \frac{\partial}{\partial y_{d+1}} & = x_{d+1}  \frac{\partial}{\partial r} + \frac{1}{r} \left( - 
      x_{d+1}(x \cdot \nabla^{(x)} ) \right),
\end{align*} 
where $\nabla^{(x)}$ is the gradient in the variables $x$ in $\RR^d$, which implies, by \eqref{eq:nabla}, 
that 
\begin{align} \label{thm4.2-proof1}
\begin{split}
  (\nabla_0)_i & =  \frac{\partial}{\partial x_i}  - x_i (x \cdot \nabla^{(x)}), \quad 1 \le i \le d, \\
    (\nabla_0)_{d+1} & = -   x_{d+1}(x \cdot \nabla^{(x)} ),
\end{split}
\end{align} 
where $\nabla_0$ is the spherical gradient on $\SS^d$. It follows then that
\begin{align} \label{thm4.2-proof2}
  | \nabla_0 F(x,x_{d+1}) |^2 & = \sum_{i=1}^{d+1}  |(\nabla_0)_i F(x)|^2  \notag \\ 
     & = \sum_{i=1}^d \left ( \frac{\partial f}{\partial x_i}  - x_i (x \cdot \nabla f(x)) \right)^2
       + \left(  x_{d+1}(x \cdot \nabla f(x)) \right)^2  \notag \\ 
       & = |\nabla f(x)|^2 - 2  (x \cdot \nabla f(x))^2 + \|x\|^2  (x \cdot \nabla f(x))^2 + 
           x_{d+1}^2  (x \cdot \nabla f(x))^2 \notag \\
       & =  |\nabla f(x)|^2 - (x \cdot \nabla f(x))^2. 
\end{align}
On the other hand, a simple computation shows that 
\begin{align} \label{thm4.2proof-2}
\sum_{1\le i< j \le d} |D_{i,j} f(x)|^2  & = 
   \frac12 \sum_{i=1}^d \sum_{j=1}^d (x_i \partial_j f(x)- x_j \partial_i f(x))^2  \\
  &    = \|x\|^2 |\nabla f(x)|^2  - (x \cdot \nabla f(x))^2. \notag
\end{align}
Putting these two identities together, we obtain 
$$
 | \nabla_0 F(x,x_{d+1}) |^2 = (1-\|x\|^2)  |\nabla f(x)|^2 +  \sum_{1\le i< j \le d} |D_{i,j} f(x)|^2,
$$
which implies, by \eqref{L2normSB},  that $||| \nabla f ||| = \| \nabla_0 F\|_{L^2(\sph, h_\k^2)}$. The proof
is completed. 
\end{proof}

Let $\CV_n^d(W_{\k,\mu})$ denote the space of orthogonal polynomials of degree $n$ with respect to 
$W_{\k,\mu}$ on $\BB^d$. It is known that $\CV_n^d(W_{\k,\mu})$ is a space of eigenfunctions of a 
second order differential-difference operator $\CD_{\k,\mu}$ defined by (\cite{X01})
$$
   \CD_{\k,\mu}: = \Delta_h -  (x \cdot \nabla)^2 + 2 \l_{\k,\mu} (x \cdot \nabla);
$$
more precisely, we have 
\begin{equation}\label{eigen-eqn2}
   \CD_{\k,\mu} P = - n(n+2 \l_\k) P, \qquad \forall P \in \CV_n^d(W_{\k,\mu}).
\end{equation}
Furthermore, let $\proj_n(W_{\k,\mu})$ be the projection operator 
$$
    \proj_n(W_{\k,\mu}): L^2(\BB^d, W_{\k,\mu}) \mapsto \CV_n^d(W_{\k,\mu}).
$$
Then \eqref{eigen-eqn2} holds for $P =   \proj_n(W_{\k,\mu})f$, which can be used to define the
fractional power of $\CD_{\k,\mu}$. It is known that $\CD_{\k,\mu}$ is related to $\Delta_{h,0}$ 
associated with $h_{\k,\mu}$. In particular, for functions invariant under the reflection group,
we have the relation
$$
   ||| \nabla f ||| = \left \| (-\CD_{\k,\mu})^{1/2} f \right \|_{L^2(\BB^d, W_{\k,\mu})}.
$$ 

In the case of the classical weight function $W_\mu$, there is no need to assume that $f$ is invariant
under a reflection group.   
 
\begin{thm}\label{thm:UCball-Wmu}
For the classical weight function $W_\mu$ with $\mu \ge 0$, let $f \in W_2^1(\BB^d, W_\mu)$ satisfying 
$\int_{\BB^d} f(x) W_{\mu}(x) dx =0$ and $\|f\|_{L^2(\BB^d,W_{\mu})} =1$.  Then 
\begin{align} \label{eq:uncert-ball}
  \min_{e \in \sph}  b_{\mu} & \int_{\BB^d} (1- \la x, e \ra) |f(x)|^2 W_{\mu}(x) dx \, |||\nabla f|||^2 
             \ge C_{\mu}, 
\end{align}
where $C_{\mu} = C_{0,\mu}$, and $||| \nabla f|||$ and $C_{\k,\mu}$ are the same as in the previous theorem. 
\end{thm}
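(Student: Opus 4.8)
The plan is to reduce Theorem~\ref{thm:UCball-Wmu} to Theorem~\ref{thm:uncet-ball1} by observing that the case $\k=0$ of the latter already gives the conclusion with the minimum taken only over the coordinate vectors $e_1,\dots,e_d$, and then to upgrade the minimum to one over all $e\in\sph$ by exploiting the rotation invariance of the classical weight $W_\mu$. Concretely, when $\k=0$ the weight $W_{0,\mu}=W_\mu$ is invariant under the full orthogonal group $O(d)$, not merely under a finite reflection group, and the quantity $|||\nabla f|||^2$ defined in \eqref{|||nablaf|||} is built only from $\nabla f$ and the angular derivatives $D_{i,j}$, both of which transform equivariantly under rotations. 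So for any fixed $e\in\sph$ one can pick $Q\in O(d)$ with $Qe_1=e$, set $g:=f\circ Q$, and check that $g$ again satisfies the hypotheses (zero mean, unit norm, in $W_2^1(\BB^d,W_\mu)$), that $\|\nabla g\|_{L^2(W_{\mu+1})}=\|\nabla f\|_{L^2(W_{\mu+1})}$ and $\|\nabla_D g\|_{L^2(W_\mu)}=\|\nabla_D f\|_{L^2(W_\mu)}$, hence $|||\nabla g|||=|||\nabla f|||$, and finally that $\int_{\BB^d}(1-\la x,e\ra)|f(x)|^2W_\mu(x)\,dx=\int_{\BB^d}(1-x_1)|g(x)|^2W_\mu(x)\,dx$ after the change of variables $x\mapsto Qx$.

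The first step I would carry out is to spell out why Theorem~\ref{thm:uncet-ball1} does \emph{not} require the invariance hypothesis when $\k=0$: the only place invariance was used in that proof was to pass from the sphere inequality \eqref{eq:uncert-h} (Theorem~\ref{thm:uncert-weigth-sph}) to the ball, and \eqref{eq:uncert-h} needs invariance only because $\nabla_{h,0}\ne\nabla_0$ in general. When $\k=0$ one has $h_\k\equiv1$, the reflection group is trivial, $\nabla_{h,0}=\nabla_0$ identically, and \eqref{eq:uncert-h} is exactly the original inequality \eqref{UCinequality} on $\SS^d$ with the full $O(d+1)$-minimum over $e\in\SS^d$ — which is \emph{stronger} than the coordinate-plane version. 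The lifting identity $|||\nabla f|||^2=\|\nabla_0 F\|_{L^2(\SS^d)}^2$ with $F(x,x_{d+1})=f(x)$ established in the proof of Theorem~\ref{thm:uncet-ball1} is still valid verbatim. So the real content is: transport the $e\in\SS^d$ minimum downstairs, and here the bijection $\SS^d_+\leftrightarrow\BB^d$ together with $1-\la(x,x_{d+1}),(e,0)\ra=1-\la x,e\ra$ for $e\in\sph$ does the job directly, giving \eqref{eq:uncert-ball} for all $e\in\sph$ with constant $C_\mu=C_{0,\mu}$.

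The second step is to verify the equivariance claims carefully: for $Q\in O(d)$ and $g=f\circ Q$ one has $\nabla g(x)=Q^{\mathsf T}(\nabla f)(Qx)$, so $|\nabla g(x)|=|(\nabla f)(Qx)|$ pointwise, and since $W_{\mu+1}$ is radial the change of variables gives $\|\nabla g\|_{L^2(W_{\mu+1})}=\|\nabla f\|_{L^2(W_{\mu+1})}$. For the angular part, the identity \eqref{thm4.2proof-2}, namely $\sum_{i<j}|D_{i,j}f(x)|^2=\|x\|^2|\nabla f(x)|^2-(x\cdot\nabla f(x))^2$, shows $|\nabla_D f(x)|^2$ is an $O(d)$-invariant expression in $(x,\nabla f(x))$, so again $\|\nabla_D g\|_{L^2(W_\mu)}=\|\nabla_D f\|_{L^2(W_\mu)}$; equivalently $|||\nabla f|||$ is, by its very construction via $\|\nabla_0 F\|$, manifestly $O(d+1)$-invariant. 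The zero-mean and unit-norm conditions are preserved because $W_\mu$ is radial. Then applying the already-established inequality \eqref{eq:uncert-ball} (in its coordinate form, i.e. with $e=e_1$, which is the $\k=0$ instance of \eqref{eq:uncert-h-ball}) to $g$ and translating back via $x\mapsto Qx$ yields the bound with the chosen $e$; taking the infimum over $e\in\sph$ completes the proof.

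I expect the main obstacle — really the only subtle point — to be making sure that the passage from the coordinate-plane minimum to the full spherical minimum is actually needed as a separate argument, or whether it comes for free from the sphere result: since \eqref{UCinequality} on $\SS^d$ genuinely has the minimum over \emph{all} $e\in\SS^d$, the cleanest route is to invoke that stronger sphere statement directly (rather than the coordinate version stated in Theorem~\ref{thm:uncert-weigth-sph}) and push it through the $\SS^d_+\leftrightarrow\BB^d$ correspondence, whence no rotation-averaging on $\BB^d$ is required at all. A minor technical wrinkle is that when $\mu=0$ the weight $W_0=(1-\|x\|^2)^{-1/2}$ is singular on $\sph$ and $\SS^d$ degenerates; one checks this boundary case is covered by the same formulas by continuity in $\mu$, exactly as in the proof of Theorem~\ref{thm:uncet-ball1}. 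Everything else is routine bookkeeping with change of variables.
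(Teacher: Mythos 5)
Your primary argument---pick $Q\in O(d)$ with $Qe_1=e$, use the rotation invariance of $W_\mu$ and of $|||\nabla f|||$ (via the identity $|||\nabla f|||^2=b_\mu\int_{\BB^d}[|\nabla f|^2-(x\cdot\nabla f)^2]W_\mu\,dx$) to reduce to the coordinate case $e=e_1$ of Theorem~\ref{thm:uncet-ball1} with $\k=0$, where the trivial reflection group makes every $f$ invariant---is correct and is exactly the paper's proof. One caution about the alternative you call the ``cleanest route'': for $\mu>0$ the lift of $W_\mu$ to $\SS^d$ is the weight $|x_{d+1}|^{2\mu}$, which is \emph{not} $O(d+1)$-invariant, so the unweighted inequality \eqref{UCinequality} with its minimum over all $e\in\SS^d$ does not apply there; you only get the coordinate-direction minimum from Theorem~\ref{thm:uncert-weigth-sph} (with $G=\ZZ_2$ acting on $x_{d+1}$), and the rotation argument on $\BB^d$ in the first $d$ variables cannot be dispensed with. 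Stick with your first route.
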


\begin{proof}
Since $W_\mu$ is invariant under the rotation group, it follows that 
$$
   \min_{e \in \sph}  b_{\mu}   \int_{\BB^d} (1- \la x, e \ra) |f(x)|^2 W_{\mu}(x) dx
       =b_{\mu}  \int_{\BB^d} (1- x_1) |f(x\tau)|^2 W_{\mu}(x) dx
$$
for some rotation $\tau \in O(d)$. In the proof of the previous theorem, we have shown, by 
$|||\nabla f ||| = \|\nabla_0 F\|_{L^2(\sph, h_\k^2)}$ and \eqref{thm4.2-proof2}, that 
$$
  |||\nabla f|||^2  =  b_\mu \int_{\BB^d} \left[|\nabla f(x)|^2 - (x \cdot \nabla f(x))^2 \right] W_\mu(x) dx. 
$$
Since $\nabla [f(x \tau)]=  (\nabla f)(x \tau)$ and, by \eqref{eq:nabla}, $x \cdot \nabla = r \frac{d}{dr}$ 
is invariant under the rotation, it follows that $||| f|||$ is rotation invariant. In particular, its value
will not change if we replace $f(x)$ by $f(x \tau)$. Hence, \eqref{eq:uncert-ball} follows from
\eqref{eq:uncert-h-ball}. 
\end{proof}

The geodesic distance function of the sphere $\SS^d$ yields a distance function  $\dist_\BB$ on the unit ball defined by
$$
  \dist_\BB(x,y) := \arccos \left( \la x,y\ra + \sqrt{1-\|x\|^2} \sqrt{1-\|y\|^2} \right), \qquad x,y \in \BB^d. 
$$
It follows, in particular, that if $y \in \BB^d$, then 
$$
    1 - \la x,y\ra  = 1- \cos \dist_\BB(x,y) = \sin^2 \frac{\dist_\BB(x,y)}2 \sim \dist_\BB (x,y)^2. 
$$

The quantity $||| \nabla f |||$ in \eqref{eq:uncert-ball} can be replaced by a more convenient operator norm
that involves only partial derivatives instead of $D_{i,j}$. 

\begin{cor}\label{cor:uncertainty-ball}
Under the assumption of the Theorem \ref{thm:UCball-Wmu}, we have   
\begin{align}\label{UC-ball2}
   & \min_{e \in \sph} \left[ b_\mu \int_{\BB^d} (1- \la x, e\ra ) |f(x)|^2 W_\mu(x) dx \right] \\
    &   \qquad\qquad   \times \left[ b_\mu \int_{\BB^d} \sum_{i=1}^d (1- x_i^2) | \partial_i f(x)|^2 W_\mu(x) dx \right] \ge C_{\mu}. \notag
\end{align}
\end{cor}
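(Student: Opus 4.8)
My plan is to prove the corollary \emph{directly}, because it cannot be obtained from \eqref{eq:uncert-ball} by substitution. Pointwise $\sum_{i}(1-x_i^2)|\partial_i f|^2=|\nabla f|^2-\sum_i x_i^2|\partial_i f|^2$, whereas the density of $|||\nabla f|||^2$ equals $|\nabla f|^2-(x\cdot\nabla f)^2$, so the two differ by $\sum_{i\ne j}x_ix_j\,\partial_i f\,\partial_j f$, which is sign-indefinite (strictly negative for $f=x_1^2-x_2^2$). Hence the new factor is \emph{dominated by}, not dominating, $|||\nabla f|||^2$, and the lower bound on the product cannot be transferred. Instead I would rerun the proof of \thmref{thm:uncert-weigth-sph} natively on $\BB^d$, with the Euclidean partials $\partial_i$ replacing the components of $\nabla_0$ and the factor $(1-x_i^2)$ replacing $(1-\xi_i^2)$.

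Write $E[g]:=b_\mu\int_{\BB^d} g\,W_\mu\,dx$, so $E[|f|^2]=1$, and set $\vec a:=E[x\,|f|^2]$ and $N:=E[\sum_i(1-x_i^2)|\partial_i f|^2]$ (the new gradient factor). Since $E[(1-\la x,e\ra)|f|^2]=1-\la e,\vec a\ra$, the minimum over $e\in\sph$ of the distance factor is $r:=1-\|\vec a\|\in(0,1)$, attained at $e^\ast=\vec a/\|\vec a\|$; it then remains to bound $Lf:=rN$ below.

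The core consists of two estimates for $Lf$, after which the conclusion follows exactly as in \thmref{thm:uncert-weigth-sph}. For the first, integration by parts against $W_{\mu+1}=(1-\|x\|^2)W_\mu$ gives $2\int f\,\partial_k f\,W_{\mu+1}=(2\mu+1)\int x_k f^2 W_\mu$, i.e.\ $E[f\,\partial_k f\,(1-\|x\|^2)]=(\mu+\tfrac12)a_k$; contracting with $e^\ast$ gives $(\mu+\tfrac12)(1-r)=E[f\,(e^\ast\!\cdot\nabla f)(1-\|x\|^2)]$. Applying Cauchy--Schwarz with splitting weight $(1-\bar x^2)^{-1}$, where $\bar x:=\la x,e^\ast\ra$, and using the pointwise chain $\tfrac{(1-\|x\|^2)^2(e^\ast\!\cdot\nabla f)^2}{1-\bar x^2}\le(1-\|x\|^2)|\nabla f|^2\le\sum_i(1-x_i^2)|\partial_i f|^2$ together with $E[(1-\bar x^2)|f|^2]\le 1-\|\vec a\|^2=(2-r)r$, I obtain $(\mu+\tfrac12)^2(1-r)^2\le(2-r)\,Lf$, i.e.\ $Lf\ge(\mu+\tfrac12)^2\tfrac{(1-r)^2}{2-r}$. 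For the second, expanding $f$ in $\CV_n(W_\mu)$ and using that the mean-zero hypothesis removes $n=0$, a Poincar\'e/spectral-gap estimate for the quadratic form $N$ should give $N\ge 2\l_{0,\mu}$, whence $Lf=rN\ge 2\l_{0,\mu}\,r$. Taking the maximum of the two bounds and minimizing over $r\in(0,2)$ then produces $C_\mu=C_{0,\mu}$, the minimizer being the value of $r$ that equalizes them.

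The hard part will be getting the constant right. The radial integration by parts above produces $\mu+\tfrac12$ rather than $\l_{0,\mu}=\mu+\tfrac{d-1}{2}$, because I discarded the tangential part of the gradient in the crude step $1-\|x\|^2\le 1-x_i^2$; similarly the naive bound $N\ge E[(1-\|x\|^2)|\nabla f|^2]$ has spectral gap only $2\mu+1$ (attained on degree one). To reach the stated $\l_{0,\mu}$ uniformly in $d$ one must retain and exploit the angular contribution $\sum_{i\ne j}x_j^2|\partial_i f|^2$ to $N$ --- precisely the $D_{ij}$-type terms that separate $N$ from the radial energy --- in \emph{both} the Cauchy--Schwarz step and the spectral-gap step. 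When $d=2$ one has $\l_{0,\mu}=\mu+\tfrac12$ and the two constants coincide, so the argument above already delivers $C_\mu$; carrying the tangential sharpening through for general $d$ is the main obstacle.
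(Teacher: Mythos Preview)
You have correctly identified a genuine error in the paper. The paper's proof of this corollary proceeds exactly by the route you reject: it asserts the pointwise inequality
\[
(1-\|x\|^2)\,|\nabla f(x)|^2 + \sum_{1\le i<j\le d}|D_{i,j}f(x)|^2 \;\le\; \sum_{i=1}^d (1-x_i^2)\,|\partial_i f(x)|^2,
\]
obtained via $(D_{i,j}f)^2\le 2[(x_i\partial_j f)^2+(x_j\partial_i f)^2]$, and then invokes \eqref{eq:uncert-ball}. But using \eqref{thm4.2proof-2} the left side equals $|\nabla f|^2-(x\cdot\nabla f)^2$, so the claimed inequality is equivalent to $\sum_i x_i^2(\partial_i f)^2\le (x\cdot\nabla f)^2$, i.e.\ to $2\sum_{i<j}x_ix_j\,\partial_i f\,\partial_j f\ge 0$. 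Your example $f=x_1^2-x_2^2$ gives $-8x_1^2x_2^2<0$, so the pointwise bound is false; moreover the sign persists after integration against $W_\mu$, so even the integrated comparison fails. (One small correction: the difference is sign-\emph{indefinite}, not one-sided --- for $f=x_1^2+x_2^2$ it is $+8x_1^2x_2^2$ --- so the new factor neither dominates nor is dominated by $|||\nabla f|||^2$. Your conclusion that the lower bound cannot be transferred is nonetheless correct.) Tracing the paper's computation, the step $2\sum_{i<j}[(x_i\partial_jf)^2+(x_j\partial_if)^2]=\sum_{i\ne j}(x_i\partial_jf)^2$ drops a factor of $2$; with the correct factor the final combination no longer telescopes to $\sum_i(1-x_i^2)(\partial_i f)^2$.

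Your proposed direct argument is the right instinct, and the two ingredients you isolate --- the integration-by-parts identity producing $(\mu+\tfrac12)a_k$ and a spectral-gap bound for $N$ --- are sound as far as they go. But the obstacle you flag is real: the crude estimate $(1-\|x\|^2)|\nabla f|^2\le\sum_i(1-x_i^2)|\partial_if|^2$ discards precisely the angular cross-terms that carry the $d$-dependence, so both your Cauchy--Schwarz step and your Poincar\'e step yield $\mu+\tfrac12$ and $2\mu+1$ rather than $\lambda_{0,\mu}+\tfrac12$ and $2\lambda_{0,\mu}$. As you note, this gives the stated $C_\mu$ only when $d=2$. For general $d$ one would need either a sharper form of the identity relating $N$ to the spherical energy on $\SS^d$ (so that the full $\lambda_{0,\mu}$ appears in the integration by parts), or to accept a weaker constant in the corollary. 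As it stands, neither the paper's proof nor your sketch delivers \eqref{UC-ball2} with the constant $C_\mu=C_{0,\mu}$ for $d\ge 3$.
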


\begin{proof}
By the definition of $D_{i,j}$ and the Cauchy-Schwartz inequality, we have
$$
   (D_{i,j} f )^2 = (x_i \partial_j f -x_j \partial_i f)^2 \le 2 \left [ (x_i \partial_j f)^2+ (x_j \partial_i f)^2 \right],
$$
which implies that 
\begin{align*}
\sum_{1 \le i < j \le d}  |D_{i,j} f|^2 &  \le 2 \sum_{1\le i < j \le d}  \left [ (x_i \partial_j f)^2+ (x_j \partial_i f)^2 \right] \\
  &  =  \sum_{i=1}^d \sum_{j=1}^d (x_i \partial_j f)^2  - \sum_{i=1}^d (x_i \partial_i f)^2
      =  \sum_{i=1}^d (\|x\|^2 - x_i^2 ) (\partial_i f)^2. 
\end{align*}
Thus, it follows readily that
$$
      (1-\|x\|^2) |\nabla f|^2 + |\nabla_D f|^2 \le  \sum_{i=1}^d (1 - x_i^2 ) (\partial_i f)^2,
$$
so that \eqref{UC-ball2} follows from \eqref{eq:uncert-ball}. 
\end{proof}

\section{Uncertainty principle on the simplex}
\setcounter{equation}{0}

In this section we use a close relation between analysis on the unit ball and on the simplex 
$$
    \TT^d : = \{x \in \RR^d: x_1 \ge 0,\ldots x_d \ge 0,  1-|x|_1 \ge 0\},
$$ 
where $|x|_1 = x_1 + \ldots + x_d$, to derive uncertainty principles on the simplex $\TT^d$ 
from the results in the previous section. 

Let $h_\k(x)$ be the reflection invariant weight function defined in \eqref{hk-weight} for a given
reflection group $G$ and a multiplicity function $\k$. Assume that $h_\k$ is also invariant under
the group $\ZZ_2^d$ of sign changes. For $\mu \ge 0$ , we consider the weight function \cite{X01}
$$
   U_{\k,\mu} (x):= \frac{h_\k^2(\sqrt{x_1},\ldots \sqrt{x_d})}{\sqrt{x_1 \cdots x_d}} (1-|x|_1)^{\mu - 1/2}, \
   \qquad x \in \TT^d. 
$$
There are essentially two examples of such weight functions, which are given below. 

\medskip\noindent
{\it Example 1. Classical Jacobi weight function}:
\begin{equation}\label{JacobiTd}
    U_{\k,\mu}(t) = \prod_{i=1}^d |x_i|^{\k_i- \f12} (1-|x|_1)^{\mu - \frac12},
\end{equation}
which is associated with the group $G= \ZZ_2^d$.

\medskip\noindent
{\it Example 2. Weight function associated with the hyperoctahedral group}:
\begin{equation}\label{weightTd2}
    U_{\k,\mu}(t) = \prod_{i=1}^d |x_i|^{\k_0- \f12} \prod_{1 \le i < j \le d} |x_i - x_j|^\k 
        (1-|x|_1)^{\mu - \frac12}.
\end{equation}

Let $L^2(\TT^d, U_{\k,\mu})$ denote the space of measurable functions for which
$$
   \| f\|_{L^2(\TT^d,U_{\k,\mu})} =  \left( b_{\k,\mu} \int_{\TT^d} |f(x)|^2 U_{\k,\mu}(x) dx\right)^{1/2}
$$
are finite, where $b_{\k,\mu}$ denotes the normalization constant so that $\|1\|_{U_{\k,\mu},2} =1$.  
The constant $b_{\k,\mu}$ turns out to be the same as the constant for $W_{\k,\mu}$ defined in
\eqref{weight-ball}, as we have 
$$
W_{\k,\mu}(x_1, \ldots,x_d)  =  J(x)^2 U_{\k,\mu}(x_1^2, \ldots x_d^2), 
$$
where $J(x) = \sqrt{x_1 \cdots x_d}$ is the Jacobian of the changing variables 
$$
   \psi: (x_1, \ldots, x_d) \in \BB^d  \mapsto (x_1^2,\ldots, x_d^2) \in \TT^d. 
$$
In fact, under this change of variables, it is known that \cite[Sect. 6.2]{DX}
\begin{equation} \label{L2normBT}
   \|f \|_{L^2(\TT^d, U_{\k,\mu})} =  \|f \circ \psi \|_{L^2(\BB^d,W_{\k,\mu})}. 
\end{equation}
We can also define the Sobolev space $W_2^1(\TT^d, U_{\k,\mu})$ similarly as in the case of $\BB^d$. 

To state our uncertainty principles, let us introduce the differential operators 
$$
   \partial_{i,j} = \partial_i - \partial_j, \qquad 1 \le i < j \le d, 
$$
and define the following functions on $\TT^d$, 
$$
   \varphi_i(x) = \sqrt{x_i (1- |x|_1)}, \quad 1 \le i \le d, \quad \hbox{and} \quad
     \varphi_{i,j} (x) =  \sqrt{x_ix_j}, \quad 1 \le i<j\le d.
$$

\begin{thm}
Let $U_{\k,\mu}$ be the classical Jacobi weight function in \eqref{JacobiTd}. Let 
$f \in W_2^1(\TT^d, U_{\k,\mu})$ satisfying $\int_{\TT^d} f(x) U_{\k,\mu}(x) dx =0$ and 
$\|f\|_{L^2(\TT^d,U_{\k,\mu})} =1$.  Then 
\begin{align} \label{eq:uncert-simplex}
  \min_{1\le i \le d}  b_{\k,\mu} & \int_{\TT^d} (1- \sqrt{x_i}) |f(x)|^2 U_{\k,\mu} (x) dx \, |||\partial f|||^2 
             \ge C_{\k,\mu}, 
\end{align}
where 
\begin{align}  \label{|||partialf|||}
|||\partial f |||^2 :=  \sum_{i=1}^d \| \varphi_i \partial_i f\|_{L^2(\TT^d, U_{\k,\mu})}^2
  + \sum_{1 \le i < j \le d}  \|\varphi_{i,j} \partial_{i,j} f \|_{L^2(\TT^d, U_{\k,\mu})}^2.
\end{align}
and $C_{\k,\mu}$ is equal to $1/4$ of the constant in  Theorem \ref{thm:uncet-ball1}. 
\end{thm}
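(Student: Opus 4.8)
The plan is to push the whole problem down to the unit ball through the change of variables $\psi$ of \eqref{L2normBT} and then lean on Theorem~\ref{thm:uncet-ball1}. Set $g:=f\circ\psi$, so that $g(y)=f(y_1^2,\dots,y_d^2)$ for $y\in\BB^d$. Since $g$ depends on $y$ only through $y_1^2,\dots,y_d^2$, it is invariant under $\ZZ_2^d$, which is precisely the reflection group attached to the weight \eqref{JacobiTd}; hence $g$ is an admissible test function for Theorem~\ref{thm:uncet-ball1}. By \eqref{L2normBT} we have $\|g\|_{L^2(\BB^d,W_{\k,\mu})}=\|f\|_{L^2(\TT^d,U_{\k,\mu})}=1$, and polarizing \eqref{L2normBT} against the constant function shows that the mean of $g$ is a fixed multiple of the mean of $f$, hence zero.

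The first genuine step, and the source of the factor $\tfrac14$, is the correspondence of the two gradient norms. From $g=f\circ\psi$ one gets $\partial_{y_i}g(y)=2y_i\,(\partial_i f)(\psi(y))$ and therefore $D_{i,j}g(y)=-2y_iy_j\,(\partial_{i,j}f)(\psi(y))$. Substituting these into \eqref{|||nablaf|||}, using that $W_{\k,\mu+1}(y)=(1-\|y\|^2)W_{\k,\mu}(y)$ supplies the factor $1-|x|_1$ after the substitution $x=\psi(y)$, and that $y_i^2=x_i$, $x_i(1-|x|_1)=\varphi_i^2(x)$, $y_i^2y_j^2=\varphi_{i,j}^2(x)$, one obtains (by a second use of \eqref{L2normBT}, applied to the functions $\varphi_i\partial_i f$ and $\varphi_{i,j}\partial_{i,j}f$ on $\TT^d$) that $\|\nabla g\|_{L^2(\BB^d,W_{\k,\mu+1})}^2=4\sum_i\|\varphi_i\partial_i f\|_{L^2(\TT^d,U_{\k,\mu})}^2$ and $\|\nabla_D g\|_{L^2(\BB^d,W_{\k,\mu})}^2=4\sum_{i<j}\|\varphi_{i,j}\partial_{i,j}f\|_{L^2(\TT^d,U_{\k,\mu})}^2$, so $|||\nabla g|||^2=4\,|||\partial f|||^2$. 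Writing $F$ for the $\ZZ_2^{d+1}$-invariant lift of $g$ to $\SS^d$ obtained via the hemisphere map of \eqref{L2normSB}, the proof of Theorem~\ref{thm:uncet-ball1} identifies $|||\nabla g|||=\|\nabla_0 F\|_{L^2(\SS^d,h_{\k,\mu}^2)}$, so $\|\nabla_0 F\|_{L^2(\SS^d,h_{\k,\mu}^2)}^2=4\,|||\partial f|||^2$.

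It remains to transport the localizing factor, and here a direct appeal to Theorem~\ref{thm:uncet-ball1} does not suffice. Since $(1-\sqrt{x_i})\circ\psi=1-|y_i|$, applying \eqref{L2normBT} to $\sqrt{1-\sqrt{x_i}}\,f$ gives $b_{\k,\mu}\int_{\TT^d}(1-\sqrt{x_i})|f|^2 U_{\k,\mu}=b_{\k,\mu}\int_{\BB^d}(1-|y_i|)|g|^2 W_{\k,\mu}$. This is not the quantity controlled by Theorem~\ref{thm:uncet-ball1}, which involves $b_{\k,\mu}\int_{\BB^d}(1-y_i)|g|^2 W_{\k,\mu}$; and because $|g|^2 W_{\k,\mu}$ is even in $y_i$ the latter collapses to $\|g\|^2=1$, so Theorem~\ref{thm:uncet-ball1} by itself yields only the weaker, unlocalized bound $|||\partial f|||^2\ge C_{\k,\mu}$ with $C_{\k,\mu}$ one quarter of the constant of Theorem~\ref{thm:uncet-ball1}. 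To recover the stated product inequality one uses $1-|y_i|\ge\tfrac12(1-y_i^2)$ together with the Cauchy--Schwarz estimate $b_{\k,\mu}\int_{\BB^d}(1-y_i^2)|g|^2W_{\k,\mu}\le(2-\rho)\rho$, where $\rho:=b_{\k,\mu}\int_{\BB^d}(1-|y_i|)|g|^2W_{\k,\mu}$; this reduces the theorem to a second-moment uncertainty inequality on $\SS^d$: for $\ZZ_2^{d+1}$-invariant $F$ of zero mean, $[\tfrac1{\omega}\int_{\SS^d}(1-\xi_i^2)|F|^2 h_{\k,\mu}^2]\,\|\nabla_0 F\|_{L^2(\SS^d,h_{\k,\mu}^2)}^2\ge c(\l_{\k,\mu})$.

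The last inequality, which is the heart of the matter, is proved by rerunning the argument of Theorem~\ref{thm:uncert-weigth-sph}, but centred so as to survive the $\ZZ_2^{d+1}$ symmetry: the first-moment identity \eqref{eq:proof3.1} is vacuous for invariant $F$ (its left side is an odd integral), so instead one applies the $i$-th component of the integration-by-parts formula \eqref{nabla_h0-adjoint} with test function $\xi_i F$. Expanding $(\nabla_{h,0})_i(\xi_i F)$ by the product rule, using $(\nabla_0)_i\xi_i=1-\xi_i^2$ (from \eqref{nabla0x.v}) and the Dunkl-operator relations, one gets an identity for $\int_{\SS^d}\xi_i^2|F|^2h_{\k,\mu}^2$ in terms of $\int_{\SS^d}\xi_i F\,(\nabla_0)_i F\,h_{\k,\mu}^2$, to which one applies Cauchy--Schwarz together with the pointwise bound $|(\nabla_0)_i F|^2\le(1-\xi_i^2)|\nabla_0 F|^2$ established inside the proof of Theorem~\ref{thm:uncert-weigth-sph}. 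Combining this with the spectral lower bound $\|\nabla_0 F\|^2=\|(-\Delta_{h,0})^{1/2}F\|^2\ge 2\l_{\k,\mu}$ (from the zero-mean hypothesis, \eqref{eigen-eqn} and Corollary~\ref{cor:2.11}) and optimizing in $\rho$, exactly as at the end of the proof of Theorem~\ref{thm:uncert-weigth-sph}, gives $c(\l_{\k,\mu})$; dividing by $4$ via the gradient correspondence yields the asserted constant. I expect this last step to be the main obstacle: the naive transport of Theorem~\ref{thm:uncet-ball1} loses the localization because the relevant first moment vanishes on sign-invariant functions, so one has to redo the Cauchy--Schwarz argument with a second-moment test function while keeping careful track of how $1-\sqrt{x_i}$, $1-y_i^2$ and $1-\xi_i^2$ compare, so that the final constant comes out to be exactly one quarter of the one in Theorem~\ref{thm:uncet-ball1}.
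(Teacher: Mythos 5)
Your reduction to the ball and the computation $|||\nabla(f\circ\psi)|||^2=4\,|||\partial f|||^2$ coincide exactly with the paper's proof; indeed the paper's entire argument consists of this gradient identity together with \eqref{L2normBT} and an appeal to Theorem~\ref{thm:uncet-ball1}. Where you part company with the paper is in refusing to accept that appeal as sufficient, and your objection is well taken. Under $\psi$ the factor $1-\sqrt{x_i}$ pulls back to $1-|y_i|$, whereas Theorem~\ref{thm:uncet-ball1} controls $b_{\k,\mu}\int_{\BB^d}(1-y_i)|g(y)|^2W_{\k,\mu}(y)\,dy$; since $g=f\circ\psi$ and $W_{\k,\mu}$ are even in $y_i$, that integral equals $1$ identically, so Theorem~\ref{thm:uncet-ball1} applied to $g$ yields only the unlocalized bound $|||\partial f|||^2\ge C_{\k,\mu}$. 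This does not imply \eqref{eq:uncert-simplex}: for $f$ concentrated near the vertex $e_i$ of $\TT^d$ the factor $b_{\k,\mu}\int_{\TT^d}(1-\sqrt{x_i})|f|^2U_{\k,\mu}\,dx$ is as small as one likes, so the product inequality is genuinely stronger. (Geometrically, such an $f$ lifts to an $F$ on $\SS^d$ concentrated at the two antipodal points $\pm e_i$, which the first moment $\int_{\SS^d}(1-\xi_i)|F|^2h_{\k,\mu}^2\,d\s$ does not detect.) The paper's proof as written does not address this discrepancy, so you have identified a real gap in it rather than introduced one of your own.

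Your proposed repair --- bound $1-|y_i|\ge\tfrac12(1-y_i^2)$ and prove a second‑moment uncertainty inequality on $\SS^d$ by running the argument of Theorem~\ref{thm:uncert-weigth-sph} with the test function $\xi_iF$ in \eqref{nabla_h0-adjoint} --- is the right kind of fix, and the ingredients you cite (the pointwise bound $|(\nabla_0)_iF|^2\le(1-\xi_i^2)|\nabla_0F|^2$, the spectral bound $\|\nabla_0F\|^2\ge 2\l_{\k,\mu}$ from \eqref{eigen-eqn} and Corollary~\ref{cor:2.11}) are all available. But the decisive step is only sketched. For $\ZZ_2^{d+1}$‑invariant $F$ one finds $(\nabla_{h,0})_i(\xi_iF)=\xi_i(\nabla_0)_iF+(1-\xi_i^2)F+2\k_iF$, and carrying \eqref{nabla_h0-adjoint} through with this gives a lower bound for the product of the normalized integral of $(1-\xi_i^2)|F|^2h_{\k,\mu}^2$ with $\|\nabla_0F\|^2$ whose constant involves $2\l_{\k,\mu}+1-2\k_i$ and is not $C_{\k,\mu}$; so the assertion that the constant in \eqref{eq:uncert-simplex} is exactly one quarter of that of Theorem~\ref{thm:uncet-ball1} would not survive this route without further work. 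In short: your diagnosis is correct, the repair is plausible, but the second‑moment inequality is the actual content of the theorem and still needs to be proved in full, with whatever constant it actually produces.
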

 
\begin{proof}
Since $f \circ \psi$ is invariant under $\ZZ_2^d$, this theorem follows from Theorem
\ref{thm:uncet-ball1} and \eqref{L2normBT} if we can show that $||| \nabla (f\circ \psi)|||$
in \eqref{|||nablaf|||} is equal to $2 ||| \partial f |||$ in \eqref{|||partialf|||}. 

For $x \in \BB^d$, let $F(x) = f \circ \psi(x) = f(x_1^2,\ldots, x_d^2)$. Then 
$\partial_i F(x) = 2 x_i (\partial_i f)\circ \psi (x)$, so that 
\begin{align*}
   (1-\|x\|^2) |\nabla F(x)|^2 & = 
      (1-\|x\|^2) 4 \sum_{i=1}^d  x_i^2 |(\partial_i f)\circ \psi (x)|^2 
          = 4  \sum_{i=1}^d  \left| (\phi_i \partial_i f ) \circ \psi(x) \right|^2
\end{align*}
and
\begin{align*}
   |\nabla_D F(x)|^2  = 4 \sum_{i=1}^d  x_i x_j |(\partial_{i,j} f)\circ \psi (x)|^2  
         = 4  \sum_{i=1}^d  \left| (\phi_{i,j} \partial_{i,j} f ) \circ \psi(x) \right |^2,
\end{align*}
from which $||| \nabla F||| = ||| \partial f |||$ follows by \eqref{L2normBT}.
\end{proof}

It is worth mentioning that the connection between the sphere $\SS^d$ shows that the 
distance function on the simplex $\TT^d$ inherited from the geometric distance on the
sphere is defined by 
$$
  d_{\TT}(x,y) = \arccos \left(\sqrt{x_1y_1}+ \ldots \sqrt{x_dy_d} + \sqrt{(1-|x|_1)(1-|y|_1)}\right).
$$
In particular, with $e_j$ denoting the $j$th coordinate vector, we see that 
$$
  1- \sqrt{x_j} = 1 - \cos d_\TT(x,e_j) = 2 \sin^2 \frac{d_\TT(x,e_j)}{2} \sim d_\TT(x,e_j)^2.
$$

In the case of weight function associated with the hyperoctahedra group, we can
state an uncertainty principle for symmetric functions, that is, functions invariant
under the symmetric group. 

\begin{thm}
Let $U_{\k,\mu}$ be the weight function in \eqref{weightTd2}. Let $f \in W_2^1(\TT^d, U_{\k,\mu})$ be a symmetric function and satisfy $\int_{\TT^d} f(x) U_{\k,\mu}(x) dx =0$ and $\|f\|_{L^2(\TT^d,U_{\k,\mu})} =1$.
Then the inequality \eqref{eq:uncert-simplex} holds. 
\end{thm}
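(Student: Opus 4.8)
The plan is to run the argument of the preceding theorem once more, transferring the inequality from the unit ball to the simplex through the quadratic map $\psi:(y_1,\dots,y_d)\in\BB^d\mapsto(y_1^2,\dots,y_d^2)\in\TT^d$; the only new point is that the governing reflection group on $\BB^d$ is now the hyperoctahedral group rather than $\ZZ_2^d$, which is precisely why symmetry of $f$ must be assumed. First I would record the weight correspondence: the function $U_{\k,\mu}$ in \eqref{weightTd2} has the required shape $h_\k^2(\sqrt{x_1},\dots,\sqrt{x_d})/\sqrt{x_1\cdots x_d}\,(1-|x|_1)^{\mu-1/2}$ with $h_\k$ the weight \eqref{hk-weight} attached to the positive root system $R_+=\{e_i:1\le i\le d\}\cup\{e_i\pm e_j:1\le i<j\le d\}$ of $G=\ZZ_2^d\rtimes S_d$, carrying multiplicity $\k_0$ on the roots $e_i$ and multiplicity $\k/2$ on the roots $e_i\pm e_j$ (the latter form a single conjugacy class, so the assignment is admissible); indeed $|y_i^2-y_j^2|^{\k/2}=|\la y,e_i-e_j\ra|^{\k/2}|\la y,e_i+e_j\ra|^{\k/2}$. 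Since $\ZZ_2^d\subset G$, this $h_\k$ is in particular invariant under $\ZZ_2^d$, so the change-of-variables identity \eqref{L2normBT} is available.

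Next I would observe that for a symmetric $f$ on $\TT^d$ the pull-back $F:=f\circ\psi$ on $\BB^d$ is invariant under the \emph{whole} group $G$: invariance under the sign changes of $\ZZ_2^d$ is automatic because $\psi$ depends only on the $y_i^2$, while invariance under the permutations of $S_d$ is precisely the symmetry hypothesis on $f$. (This is the step where, unlike the classical Jacobi case, one really needs $f$ to be symmetric.) By \eqref{L2normBT} one also has $\int_{\BB^d}F\,W_{\k,\mu}=0$ and $\|F\|_{L^2(\BB^d,W_{\k,\mu})}=1$, so $F$ satisfies every hypothesis of Theorem~\ref{thm:uncet-ball1} for this $G$. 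Applying that theorem yields
\[
  \min_{1\le i\le d}b_{\k,\mu}\int_{\BB^d}(1-y_i)|F(y)|^2W_{\k,\mu}(y)\,dy\;|||\nabla F|||^2\ \ge\ C_{\k,\mu},
\]
where now $\l_{\k,\mu}=\g_\k+\mu+\tfrac{d-1}{2}$ with $\g_\k=\sum_{v\in R_+}\k_v=d\k_0+\binom{d}{2}\k$ for the present root system.

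Finally I would transport both factors to $\TT^d$. By \eqref{L2normBT} applied to $(1-y_i)|F|^2$, which is the pull-back of $(1-\sqrt{x_i})|f|^2$, the first factor equals $b_{\k,\mu}\int_{\TT^d}(1-\sqrt{x_i})|f(x)|^2U_{\k,\mu}(x)\,dx$. For the second factor I would reuse verbatim the computation from the proof of the previous theorem: from $\partial_iF(y)=2y_i(\partial_if)\circ\psi(y)$ one gets $(1-\|y\|^2)|\nabla F(y)|^2=4\sum_i|(\varphi_i\partial_if)\circ\psi(y)|^2$ and $|\nabla_DF(y)|^2=4\sum_{i<j}|(\varphi_{i,j}\partial_{i,j}f)\circ\psi(y)|^2$, so that integration together with \eqref{L2normBT} gives $|||\nabla F|||^2=4|||\partial f|||^2$ with $|||\partial f|||$ as in \eqref{|||partialf|||}. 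Inserting these two identities into the displayed inequality produces \eqref{eq:uncert-simplex} with $C_{\k,\mu}$ equal to one quarter of the constant in Theorem~\ref{thm:uncet-ball1}, evaluated at the present $\l_{\k,\mu}$. The main obstacle is purely bookkeeping — identifying \eqref{weightTd2} with the hyperoctahedral $h_\k$ and checking the two invariance statements — after which the analytic content is exactly that of the classical Jacobi case and nothing new is required.
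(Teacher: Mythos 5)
Your overall strategy is exactly the one the paper intends (it states this theorem without proof, leaving it as a repetition of the Jacobi case), and the hyperoctahedral bookkeeping you supply is correct: \eqref{weightTd2} does come from $h_\k(y)=\prod_i|y_i|^{\k_0}\prod_{i<j}|y_i-y_j|^{\k/2}|y_i+y_j|^{\k/2}$ for the root system $\{e_i\}\cup\{e_i\pm e_j\}$ of $G=\ZZ_2^d\rtimes S_d$, the multiplicity assignment is admissible since the long roots form one conjugacy class, $\g_\k=d\k_0+\binom{d}{2}\k$, and $F=f\circ\psi$ is $G$-invariant precisely because it is automatically even in each variable while symmetry of $f$ supplies invariance under $S_d$. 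The identity $|||\nabla F|||^2=4|||\partial f|||^2$ also transfers verbatim. So as a comparison with the paper: same route, with the group-theoretic details (which are the only genuinely new content of this theorem) filled in correctly.

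There is, however, a gap in the transfer of the first factor, which you inherit from the paper's own proof of the Jacobi-case theorem but which you should not paper over. The pull-back of $(1-\sqrt{x_i})|f(x)|^2$ under $\psi$ is $(1-|y_i|)|F(y)|^2$, not $(1-y_i)|F(y)|^2$, since $\sqrt{y_i^2}=|y_i|$ on all of $\BB^d$. Because $F$ and $W_{\k,\mu}$ are even in $y_i$, the factor appearing in Theorem \ref{thm:uncet-ball1} satisfies
\begin{equation*}
b_{\k,\mu}\int_{\BB^d}(1-y_i)|F(y)|^2W_{\k,\mu}(y)\,dy=\|F\|_{L^2(\BB^d,W_{\k,\mu})}^2=1,
\end{equation*}
whereas the quantity you actually need, $b_{\k,\mu}\int_{\BB^d}(1-|y_i|)|F|^2W_{\k,\mu}\,dy=1-b_{\k,\mu}\int_{\BB^d}|y_i|\,|F|^2W_{\k,\mu}\,dy$, is strictly smaller and can be made arbitrarily small by concentrating $f$ near the vertex $e_i$ of $\TT^d$. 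Thus, applied to the $G$-invariant function $F$, the ball theorem only yields the Poincar\'e-type bound $|||\partial f|||^2\ge C_{\k,\mu}$ and does not imply \eqref{eq:uncert-simplex}: the inequality $1-|y_i|\le 1-y_i$ goes the wrong way. To close this you would need a version of the sphere/ball inequality for even functions whose localizing factor is $1-|y_i|$ (distance to the antipodal pair $\pm e_i$); the key identity \eqref{eq:proof3.1} used in the sphere proof degenerates to $0=0$ for even $f$, so this requires a genuinely new argument rather than a change of variables.
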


\medskip \noindent
{\bf Acknowledgement.} 
The author thanks Professor Feng Dai for stimulating discussions on this and other related 
topics, and he thanks two anonymous referees for their careful reading and for pointing out an 
error in the formula (3.15), which led to a substantial revision of the paper.

\bibliographystyle{amsalpha}

\end{document}